\renewenvironment{proof}[1][Proof]{\textbf{#1.} }
{\ \rule{0.5em}{0.5em}}
\newtheorem{theorem}{Theorem}
\newtheorem{lemma}{Lemma}
\newtheorem{remark}{Remark}
\begin{document}

\title[The Ricci curvature ... \dots]
{The Ricci curvature and the normalized Ricci flow on the Stiefel manifolds
$\operatorname{SO}(n)/\operatorname{SO}(n-2)$}

\author{Nurlan Abiev}
\address{N.\,A.~Abiev \newline
Institute of Mathematics NAS of the Kyrgyz Republic, Bishkek, prospect Chui, 265a, 720071, Kyrgyz Republic}
\email{abievn@mail.ru}

\begin{abstract}
We proved that on  every Stiefel manifold
$V_2\mathbb{R}^n\cong \operatorname{SO}(n)/\operatorname{SO}(n-2)$ with $n\ge 3$
the normalized Ricci flow preserves the positivity of the Ricci curvature of  invariant Riemannian metrics with positive Ricci curvature.
Moreover, the normalized Ricci flow evolves all metrics with  mixed Ricci curvature into metrics with positive Ricci curvature in finite time.
From the point of view of the theory of dynamical systems
we  proved that for every invariant set~$\Sigma$ of the normalized Ricci flow
on~$V_2\mathbb{R}^n$ defined as $x_1^{n-2}x_2^{n-2}x_3=c$, $c>0$, there exists
a smaller invariant set $\Sigma\cap \mathscr{R}_{+}$ for every $n\ge 3$, where~$\mathscr{R}_{+}$ is the domain in $\mathbb{R}_{+}^3$ responsible for
parameters $x_1, x_2, x_3>0$ of invariant Riemannian metrics on~$V_2\mathbb{R}^n$ admitting positive Ricci curvature.

\vspace{2mm} \noindent Key words and phrases:
Stiefel manifold, Riemannian metric, normalized Ricci flow, Ricci curvature, dynamical system, invariant set, singular point.
\vspace{2mm}

\noindent {\it 2020 Mathematics Subject Classification:} 53C30,  53E20,  37C10, 37C79.
\end{abstract}

\maketitle

\section{Introduction}

 One of powerful tools to study the evolution of Riemannian metrics on
a given  Riemannian manifold~$\mathscr{M}^d$  is the normalized Ricci  flow equation
\begin{equation}\label{ricciflow}
  \frac{\partial}{\partial t} \bold{g}(t) = -2 \operatorname{Ric}_{\bold{g}}+
 2 \bold{g}(t) \frac{S_{\bold{g}}}{d},
\end{equation}
introduced by Hamilton~\cite{Ham} for a family
of a Riemannian metrics~$\bold{g}(t)$ on $\mathscr{M}$,
where $\operatorname{Ric}_{\bold{g}}$ and $S_{\bold{g}}$ are
respectively the Ricci tensor and the scalar curvature of a metric~$\bold{g}$.
A lot of papers are devoted to a class of Riemannian
manifolds $\mathscr{M}$ called homogeneous, on which
the isometry group $\operatorname{Isom}(\mathscr{M})$ acts transitively.
Any Riemannian homogeneous manifold $\mathscr{M}$
can be identified  (is diffeomorphic) to some homogeneous space $G/H$
with $G=\operatorname{Isom}(\mathscr{M})$ being a Lie group according to Myers and Steenrod theorem
and $H=G_m$
the  isotropy subgroup at a given point $m\in \mathscr{M}$.
A large class consists of reductive homogeneous spaces  $G/H$,
where~$G$ is a compact and semisimple Lie group and~$H$ is a connected closed subgroup of $G$.
Let  $\mathfrak{g}$ and $\mathfrak{h}$ be the corresponding Lie algebras of $G$ and $H$,
and  $\mathfrak{g}=\mathfrak{h}\oplus\mathfrak{p}$ be a reductive
decomposition of $\mathfrak{g}$ orthogonal with respect to the Killing form~$B$ defined on~$\mathfrak{g}$
so that $\operatorname{Ad}(H)\mathfrak{p}\subset \mathfrak{p}$.
Since $B$ is negative definite $-B$  defines an $\operatorname{Ad}(G)$-invariant inner product
$\langle\cdot, \cdot\rangle=-B(\cdot, \cdot)$ on $\mathfrak{g}$.
Assume that  $\mathfrak{p}$  admits a decomposition
$\mathfrak{p}_1\oplus \cdots \oplus \mathfrak{p}_k$
into  pairwise inequivalent irreducible  $\operatorname{Ad}(H)$-modules
of dimensions $d_i=\operatorname{dim}\mathfrak{p}_i$ such that
$d_1+\cdots + d_k=d=\operatorname{dim} (G/H)$.
Then any $G$-invariant symmetric covariant 2-tensor on $G/H$ can be determined as
$\left.c_1\langle\cdot, \cdot\rangle\right|_{\mathfrak{p}_1}+\cdots +
\left.c_k \langle\cdot, \cdot\rangle\right|_{\mathfrak{p}_k}
$,
where $c_1,\dots, c_k$ are some real numbers.
In particular, so is  any $G$-invariant metric
$
{\bf g}(\cdot, \cdot)=\left.x_1\langle\cdot, \cdot\rangle\right|_{\mathfrak{p}_1}+\cdots +
\left.x_k \langle\cdot, \cdot\rangle\right|_{\mathfrak{p}_k}
$,
$x_i\in \mathbb{R}_{+}$, $i=1, \dots, k$,
on $G/H$ and its Ricci tensor
$
\operatorname{Ric}_{\bf g}(\cdot, \cdot)=\left.x_1 {\bf r}_1\langle\cdot, \cdot\rangle\right|_{\mathfrak{p}_1}+\cdots +
\left.x_k {\bf r}_k\langle\cdot, \cdot\rangle\right|_{\mathfrak{p}_k}
$,
${\bf r}_i={\bf r}_i(x_1,\dots, x_k)\in \mathbb{R}$, $i=1,\dots, k$,
called the principal Ricci curvatures
(see \cite{Arvan1, BerNik2020, Kerr} and references therein for details).
The circumstance $\mathfrak{p}=\mathfrak{p}_1\oplus \cdots \oplus \mathfrak{p}_k$
allows an opportunity to split~\eqref{ricciflow} into the system
of~$k$ nonlinear autonomous  differential equations
\begin{equation}\label{decom_ricciflow_general}
\dot{x}_i=
-2 x_i \left({\bf r}_i-\frac{\sum_{i=1}^k
d_i {\bf r}_i }{\sum_{i=1}^kd_i}\right),
\qquad i=1,\dots, k.
\end{equation}

The  system~\eqref{decom_ricciflow_general} was
studied in~\cite{Ab2, Ab7, AANS1, AANS2, AN, Stat} for the cases of generalized Wallach spaces,
Stiefel manifolds and generalized flag manifolds.
In particular, an interesting question is ``whether the normalized Ricci  flow
preserve or not the positivity of the Ricci curvature of Riemannian metrics on a given manifold?''
In~\cite{Ab7, AN} some new results was obtained
concerning  this question on the Wallach spaces and generalized Wallach spaces
with coincided parameters $a_1=a_2=a_3\in (0,1/2)$.
Some extensions of these results can be found in~\cite{Caven, Gonzales}.
Recently, in~\cite{Ab24} results of~\cite{Ab7, AN} were generalized
to the general case $a_1, a_2, a_3\in (0,1/2)$ in accordance with the classification of~\cite{Nikonorov4}:

\begin{theorem}[Theorem~5 in \cite{Ab24}]\label{thm_sum_a_i<1/2}
The normalized Ricci flow~\eqref{ricciflow} evolves some invariant Riemannian metrics with  positive Ricci curvature into metrics with mixed Ricci curvature  on every generalized Wallach space with $a_1+a_2+a_3\le 1/2$.
\end{theorem}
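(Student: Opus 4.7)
The plan is to use the explicit formulas for the principal Ricci curvatures $\mathbf{r}_i=\mathbf{r}_i(x_1,x_2,x_3)$ on a generalized Wallach space to show that the region $\mathscr{R}_{+}\subset \mathbb{R}_{+}^{3}$ of positive Ricci curvature fails to be forward-invariant under~\eqref{decom_ricciflow_general}, and specifically that an outward-pointing boundary point can be constructed under the hypothesis $a_1+a_2+a_3\le 1/2$.

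Since the volume form $x_1^{d_1}x_2^{d_2}x_3^{d_3}$ is preserved by~\eqref{decom_ricciflow_general}, it suffices to work on one of its level surfaces $V_c$, and $\mathscr{R}_{+}\cap V_c$ is an open semi-algebraic subset of a $2$-dimensional manifold whose boundary is a union of smooth arcs lying in the zero sets $\{\mathbf{r}_i=0\}$, $i=1,2,3$. At any point of the arc $\{\mathbf{r}_3=0\}\cap\partial(\mathscr{R}_{+}\cap V_c)$ (where $\mathbf{r}_1,\mathbf{r}_2\ge 0$) the third component of~\eqref{decom_ricciflow_general} simplifies to $\dot{x}_3=\tfrac{2x_3}{d_1+d_2+d_3}\bigl(d_1\mathbf{r}_1+d_2\mathbf{r}_2\bigr)\ge 0$; combined with the fact that, along $V_c$, $\mathbf{r}_3$ is driven strictly across zero by the transverse derivative $\nabla\mathbf{r}_3\cdot\dot{\mathbf{x}}$, this will give outward transversality into $\{\mathbf{r}_3<0\}$.

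The decisive step is therefore the \emph{existence} of such a boundary point with $\mathbf{r}_1,\mathbf{r}_2>0$ strict and the monotonicity above strict. Here the hypothesis $a_1+a_2+a_3\le 1/2$ enters through the defining relations $2a_i+b_i=1$ of a generalized Wallach space, which via Nikonorov's classification~\cite{Nikonorov4} constrain the dimensions $d_i$ and the coefficients of the rational expressions for $\mathbf{r}_i$. I would parametrize $\{\mathbf{r}_3=0\}\cap V_c$ by a single real parameter, substitute into $\mathbf{r}_1,\mathbf{r}_2$, and verify that under $a_1+a_2+a_3\le 1/2$ there is a non-empty open sub-arc on which both are strictly positive and the normal derivative $\nabla\mathbf{r}_3\cdot\dot{\mathbf{x}}$ is strictly negative. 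Once such a $p$ is produced, the flow-box theorem yields $\tau>0$ and a metric $\mathbf{g}(-\tau)\in \mathscr{R}_{+}\cap V_c$ whose forward orbit crosses $\{\mathbf{r}_3=0\}$ at time $t=\tau$ into $\{\mathbf{r}_3<0,\,\mathbf{r}_1,\mathbf{r}_2>0\}$, which is precisely a metric of mixed Ricci curvature.

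The main obstacle will be the third paragraph: translating the scalar inequality $\sum a_i\le 1/2$ into a sharp sign statement on a non-trivial sub-arc of $\{\mathbf{r}_3=0\}$. I expect this to be feasible either by exploiting a point of symmetry of the arc (for instance $x_1=x_2$, which reduces the problem to a one-variable inequality in $x_3/x_1$ involving only $a_3$ and $a_1+a_2$) or, failing symmetry in $(a_1,a_2,a_3)$, by a case-by-case check against the classification in~\cite{Nikonorov4}. The boundary value $a_1+a_2+a_3=1/2$ is likely to mark the regime in which the ``off-diagonal'' curvature contributions and the trace correction $S_{\mathbf{g}}/d$ in~\eqref{ricciflow} become competitive, and the inequality ensures the off-diagonal term still dominates in the right direction along some portion of the arc.
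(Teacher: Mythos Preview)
The theorem you are attempting is not proved in this paper: it is quoted from~\cite{Ab24} as background in the introduction, and no argument for it appears anywhere in the present text. There is therefore no proof here against which to compare your proposal.

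On the substance of the proposal: the strategy---find a point of $\{\mathbf r_3=0\}\cap\partial\mathscr{R}_{+}$ with $\mathbf r_1,\mathbf r_2>0$ at which the vector field of~\eqref{decom_ricciflow_general} points out of $\mathscr{R}_{+}$, then pull back along the flow---is the natural one, dual to the inward-pointing computation of Lemma~\ref{lem_241024_2} in this paper. But what you have written is a plan, not a proof. The identity $\dot x_3=\tfrac{2x_3}{d}(d_1\mathbf r_1+d_2\mathbf r_2)\ge 0$ on $\{\mathbf r_3=0\}$ is correct yet idle: it says nothing about the sign of $\tfrac{d}{dt}\mathbf r_3=\nabla\mathbf r_3\cdot\dot{\mathbf x}$, because $\partial\mathbf r_3/\partial x_3$ has no fixed sign, and it is precisely that sign which decides transversality. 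You concede this, and then defer the entire content of the theorem to the ``decisive step'' of exhibiting a sub-arc where $\nabla\mathbf r_3\cdot\mathbf V<0$ under $a_1+a_2+a_3\le 1/2$---but that step is never carried out. The reduction $x_1=x_2$ is a sensible first move; until the resulting one-variable inequality is actually written down and its sign checked against $\sum a_i$, the hypothesis $a_1+a_2+a_3\le 1/2$ has not been used and the argument remains a sketch.
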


\begin{theorem}[Theorem~6 in \cite{Ab24}]\label{thm_sum_a_i>1/2}
The following assertions hold for generalized Wallach spaces with  $a_1+a_2+a_3> 1/2$:
\begin{enumerate}
\item
The normalized Ricci flow~\eqref{ricciflow} evolves all invariant Riemannian metrics
with positive Ricci curvature into metrics with positive Ricci curvature if $\theta \ge \max\left\{\theta_1, \theta_2, \theta_3\right\}$;
\item
The normalized Ricci flow~\eqref{ricciflow} evolves some metrics with positive Ricci curvature into metrics with positive Ricci curvature
if   $\theta \ge \max\left\{\theta_1, \theta_2, \theta_3\right\}$ fails,
\end{enumerate}
where $\theta:=a_1+a_2+a_3-1/2$ and $\theta_i:=a_i-\dfrac{1}{2}+\dfrac{1}{2}\sqrt{\dfrac{1-2a_i}{1+2a_i}}$,
$i=1,2,3$.
 \end{theorem}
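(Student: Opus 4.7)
The plan is to view the system \eqref{decom_ricciflow_general} for $k=3$ as a three-dimensional autonomous ODE on $\mathbb{R}_{+}^{3}$ with coordinates $(x_1,x_2,x_3)$, and to verify positive invariance of the region $\mathscr{R}_{+}=\{{\bf r}_1,{\bf r}_2,{\bf r}_3>0\}$ by checking the direction of the vector field along each smooth piece $\mathcal{S}_i:=\partial\mathscr{R}_{+}\cap\{{\bf r}_i=0\}$ of its boundary. The normalization term in \eqref{ricciflow} forces the volume to be preserved along the flow, so the monomial $V(x_1,x_2,x_3)=x_1^{d_1}x_2^{d_2}x_3^{d_3}$ is a first integral of \eqref{decom_ricciflow_general}; this reduces the analysis to a one-parameter family of two-dimensional flows on the invariant leaves $\Sigma_c=\{V=c\}$, and in particular reduces each boundary curve $\mathcal{S}_i\cap\Sigma_c$ to a single scalar parameter.

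For part~(1), I would take the explicit rational formulas for ${\bf r}_1,{\bf r}_2,{\bf r}_3$ on a generalized Wallach space, differentiate ${\bf r}_i$ along \eqref{decom_ricciflow_general}, and substitute ${\bf r}_i=0$ together with $V=c$ to reduce the sign of $\dot{{\bf r}}_i$ on $\mathcal{S}_i\cap\Sigma_c$ to a one-variable inequality in a ratio such as $t=x_j/x_k$. Minimising the resulting rational function of $t$ should yield a critical value characterised by a quadratic equation, and the requirement that this minimum be non-negative should collapse precisely to $\theta\ge\theta_i$; the expression $\sqrt{(1-2a_i)/(1+2a_i)}$ appearing in $\theta_i$ is exactly of the form produced by the minimiser of such a quadratic. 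Taking the maximum over $i=1,2,3$ then yields the stated criterion, and positive invariance of $\mathscr{R}_{+}$ follows by a Nagumo-type boundary argument.

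For part~(2), when $\theta<\theta_{i_0}$ for some index $i_0$, the same one-variable analysis exhibits a point of $\mathcal{S}_{i_0}$ at which the vector field of \eqref{decom_ricciflow_general} crosses $\partial\mathscr{R}_{+}$ strictly outward; trajectories initiated just inside $\mathscr{R}_{+}$ near such a point leave $\mathscr{R}_{+}$ in finite time, and thus the ``all metrics'' conclusion fails. Conversely, the invariant Einstein metric lies inside $\mathscr{R}_{+}$ under $a_1+a_2+a_3>1/2$ and is a fixed point of \eqref{decom_ricciflow_general}; a neighbourhood of it provides a full family of initial data whose orbits remain in $\mathscr{R}_{+}$, giving the ``some metrics'' assertion.

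The main obstacle is the algebraic reduction that extracts the sharp threshold $\theta_i$ from the inequality $\dot{{\bf r}}_i|_{\{{\bf r}_i=0,\,V=c\}}\ge 0$. After clearing denominators this becomes a nontrivial homogeneous polynomial condition in $x_1,x_2,x_3$ whose coefficients depend asymmetrically on $a_1,a_2,a_3$; reducing it cleanly to a one-ratio minimisation and showing that the resulting minimum equals precisely $\theta_i$ is the delicate step, and it is substantially harder than in the symmetric case $a_1=a_2=a_3$ treated in \cite{Ab7, AN}.
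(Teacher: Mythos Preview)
This theorem is not proved in the present paper at all: it is quoted in the Introduction as Theorem~6 of the cited preprint~\cite{Ab24}, purely as background for the new result (Theorem~\ref{thm_371124}) on Stiefel manifolds. There is therefore no proof here against which your proposal can be compared.

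That said, your outline is entirely in the spirit of what the paper does for its own main theorem. The proof of Theorem~\ref{thm_371124} proceeds exactly by (i) using the volume $x_1^{d_1}x_2^{d_2}x_3^{d_3}$ as a first integral to reduce to the invariant leaves $\Sigma$, (ii) parametrising each boundary piece of $\mathscr{R}_{+}$ and computing the sign of the inner product of the vector field with the inward normal (Lemma~\ref{lem_241024_2}), and (iii) invoking the location and type of the unique singular point to conclude. So the Nagumo-type boundary analysis you describe is the right framework.

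One caution on your sketch of part~(2): you argue that when $\theta<\theta_{i_0}$ some trajectory exits $\mathscr{R}_{+}$, but the theorem as stated only asserts that \emph{some} metrics are evolved into metrics with positive Ricci curvature, not that the ``all metrics'' conclusion fails. Your Einstein-fixed-point argument suffices for the positive assertion; the outward-crossing part is extra (and plausible, but would need to be justified separately if you want the dichotomy to be sharp). Also, your claim that the Einstein metric lies in $\mathscr{R}_{+}$ whenever $a_1+a_2+a_3>1/2$ is not obvious in the asymmetric case and would itself require verification; for generalized Wallach spaces there can be up to four Einstein metrics, and one must check that at least one of them has all ${\bf r}_i>0$.
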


\begin{theorem}[Theorem~7 in \cite{Ab24}]\label{thm_3}
The following assertions hold for\, $\operatorname{SO}(k+l+m)/\operatorname{SO}(k)\times \operatorname{SO}(l)\times \operatorname{SO}(m)$,
 $k\ge l\ge  m>1$ (shortly $\operatorname{GWS}$~{\bf 1} according to~\cite{Nikonorov4}).
Under the normalized Ricci flow~\eqref{ricciflow}
\begin{enumerate}
\item
All invariant Riemannian metrics with
positive Ricci curvature  can be evolved  into metrics  with
positive Ricci curvature    if either  $k\le 11$ or one of
the conditions  $2<l+m\le X(k)$ or  $l+m\ge Y(k)$ is satisfied at each  fixed
$k\in \{12, 13,14,15,16\}$;
\item
At least some invariant Riemannian metrics  with
positive Ricci curvature   can be evolved  into metrics  with
positive Ricci curvature   if $k\ge 17$ or if
$X(k)<l+m<Y(k)$  for $k\in \{12, 13,14,15,16\}$;
\item
The  number  of $\operatorname{GWS}$~{\bf 1} on which any original metric with $\operatorname{Ric}>0$ maintains $\operatorname{Ric}>0$  is finite,
whereas there are infinitely (countably)
many $\operatorname{GWS}$~{\bf 1} on which  $\operatorname{Ric}>0$ can be preserved at least for some original metrics with $\operatorname{Ric}>0$,
\end{enumerate}
where
$X(k):=\dfrac{2k(k-2)}{k+2+\sqrt{k^2-12k+4}}-k+2$ and
$Y(k):=\dfrac{2k(k-2)}{k+2-\sqrt{k^2-12k+4}}-k+2$.
\end{theorem}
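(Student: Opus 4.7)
The plan is to specialize Theorem~\ref{thm_sum_a_i>1/2} to the family $\operatorname{SO}(k+l+m)/\operatorname{SO}(k)\times\operatorname{SO}(l)\times\operatorname{SO}(m)$. For this $\operatorname{GWS}$~{\bf 1} the three parameters have the standard rational form $a_i=n_i/(2(k+l+m-2))$ with $\{n_1,n_2,n_3\}=\{k,l,m\}$, so a direct computation gives $a_1+a_2+a_3=(k+l+m)/(2(k+l+m-2))>1/2$ for every admissible triple. Hence Theorem~\ref{thm_sum_a_i>1/2} is the relevant criterion, and with $\theta=1/(k+l+m-2)$ the whole question reduces to a sharp analysis of the inequality $\theta\ge\max\{\theta_1,\theta_2,\theta_3\}$.

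The next step is to identify $\max\theta_i$ as a function of $(k,l,m)$. A short study of $\varphi(a):=a-\frac12+\frac12\sqrt{(1-2a)/(1+2a)}$ on $(0,1/2)$ shows that $\varphi$ vanishes at both endpoints, is strictly positive in between, and has a unique interior maximum at the solution of $(1+2a)^3(1-2a)=1$. Because $\varphi$ is not monotone, the claim that $\max\theta_i$ is always controlled by the index attached to $k$ requires a direct pairwise comparison of $\theta_i-\theta_j$ that uses the rational form of the $a_i$'s. Once this dominance is granted, clearing radicals in $\theta\ge\theta_1$ leads to the quadratic $4s^2-(k-2)(k-6)\,s+2(k-2)^2\ge 0$ in $s:=l+m$, whose discriminant equals $(k-2)^2(k^2-12k+4)$. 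Its roots are precisely the functions $X(k)$ and $Y(k)$ from the statement, so the sharp inequality holds exactly when $s\le X(k)$ or $s\ge Y(k)$.

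The three assertions then follow by case analysis. For $k\le 11$ the discriminant is negative and the parabola is always positive, so the inequality holds for every admissible $s$, giving assertion~(1) in this range. For $k\in\{12,\dots,16\}$ both admissible intervals $\{2<s\le X(k)\}$ and $\{s\ge Y(k)\}$ are nonempty, so (1) and (2) coexist according to whether $s$ falls inside or outside $[X(k),Y(k)]$. For $k\ge 17$ elementary estimates give $X(k)<4$ and $Y(k)>2k$, excluding every admissible $s\in[4,2k]$; only part~(2) of Theorem~\ref{thm_sum_a_i>1/2} remains, which yields assertion~(2) in this range. The finiteness in~(3) is immediate: case~(1) can occur only for $k\le 16$, and for each such $k$ only finitely many $(l,m)$ are admissible, whereas case~(2) is guaranteed for every $k\ge 17$, and $k$ ranges over an unbounded set.

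The main obstacle is the identification of $\max\{\theta_1,\theta_2,\theta_3\}$: since $\varphi$ is not monotone on $(0,1/2)$, one cannot deduce the maximum from $a_1\ge a_2\ge a_3$ alone, and the required pairwise comparison must exploit the specific arithmetic of the parameters. Once this dominance is established, the remaining work — deriving the explicit quadratic in $s$ and verifying the estimates $X(k)<4$, $Y(k)>2k$ for $k\ge 17$ — is routine but arithmetically heavy.
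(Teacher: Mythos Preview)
The paper does not contain a proof of this theorem. Theorem~\ref{thm_3} is quoted verbatim from the preprint~\cite{Ab24} as background and motivation; the present paper then turns to the Stiefel case $\operatorname{SO}(n)/\operatorname{SO}(n-2)$, which is \emph{not} covered by Theorem~\ref{thm_3} because $(l,m)=(1,1)$ is excluded there. So there is nothing here to compare your proposal against.

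For what it is worth, your outline is the natural route and the arithmetic you sketch is correct: with $a_i=n_i/(2(k+l+m-2))$ one gets $\theta=1/(k+l+m-2)$, and clearing the radical in $\theta\ge\theta_1$ (the index attached to $k$) yields exactly the quadratic $4s^2-(k-2)(k-6)s+2(k-2)^2\ge 0$ in $s=l+m$, whose roots are $(k-2)(k-6\mp\sqrt{k^2-12k+4})/8$, and a short rationalization shows these coincide with the $X(k)$, $Y(k)$ of the statement. The one genuine gap you correctly flag is the identification $\max\{\theta_1,\theta_2,\theta_3\}=\theta_1$: since $\varphi$ is unimodal rather than monotone on $(0,1/2)$, the ordering $a_1\ge a_2\ge a_3$ alone does not force $\theta_1\ge\theta_2,\theta_3$, and you would have to supply the pairwise comparison (using the constraint $a_1+a_2+a_3=\tfrac{k+l+m}{2(k+l+m-2)}$) before the reduction to a single quadratic in $s$ is legitimate. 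That step is presumably carried out in~\cite{Ab24}.
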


Since $(l,m)\ne (1,1)$
the case of the space $\operatorname{SO}(n)/\operatorname{SO}(n-2)\times \operatorname{SO}(1)\times \operatorname{SO}(1)$ can not be covered by Theorem~\ref{thm_3}.
In the present paper we study the above question on the maintenance of positivity
of the Ricci curvature relatively  to the
homogeneous space~$\operatorname{SO}(n)/\operatorname{SO}(n-2)$
diffeomorphic to the Stiefel manifold~$V_2\mathbb{R}^n$.
In general  $V_k\mathbb{R}^n$, $k\le n$, can be defined as the set of
$n\times k$ matrices $A$ with real entries such that $A^tA=I_k$,
where $A^t$ means the conjugate transpose of $A$ and~$I_k$ is the $k\times k$ identity matrix.
$V_k\mathbb{R}^n$ becomes a compact smooth manifold in the subspace topology inherited from the topology of~$\mathbb{R}^{n\times k}$.
It is known that for $k<n$ the group~$\operatorname{SO}(n)$ acts on $V_k\mathbb{R}^n$ transitively
and $V_k\mathbb{R}^n$ is diffeomorphic to the reductive homogeneous space~$\operatorname{SO}(n)/\operatorname{SO}(n-k)$.
In the case of these spaces there are equivalent submodules
which may cause a complicated decomposition of the Ricci tensor.
Although the decomposition of~$\mathfrak{p}$ admits two equivalent submodules
$\mathfrak{p}_1$ and~$\mathfrak{p}_2$ causing the dependence of
an $\operatorname{SO}(n)$-invariant metric
of $\operatorname{SO}(n)/\operatorname{SO}(n-2)$
on four parameters it was proved in~\cite{Kerr} that  $\operatorname{SO}(n)$-invariant metrics on $\operatorname{SO}(n)/\operatorname{SO}(n-2)$ can be reduced to a diagonal form and described by an $\operatorname{Ad}(\operatorname{SO}(n-2))$-invariant inner product
\begin{equation}\label{metricStiffel_V_2(R^n)}
(\cdot, \cdot)=\left.x_1\langle\cdot, \cdot\rangle\right|_{\mathfrak{p}_1}+
\left.x_2\langle\cdot, \cdot\rangle\right|_{\mathfrak{p}_2}+
\left.x_3\langle\cdot, \cdot\rangle\right|_{\mathfrak{p}_3}, \quad x_i\in \mathbb{R}_{+},
\end{equation}
on $\mathfrak{p}=\mathfrak{p}_1\oplus \mathfrak{p}_2\oplus \mathfrak{p}_3$
with the corresponding Ricci tensor
$
\operatorname{Ric}_{\bf g}(\cdot, \cdot)=
\left.x_1 {\bf r}_1\langle\cdot, \cdot\rangle\right|_{\mathfrak{p}_1}+
\left.x_2 {\bf r}_2\langle\cdot, \cdot\rangle\right|_{\mathfrak{p}_2}+
\left.x_3 {\bf r}_3\langle\cdot, \cdot\rangle\right|_{\mathfrak{p}_3}
$
(an exposition in detail can be found in~\cite{Stat16}).
Clearly  $d_1=\operatorname{dim}\mathfrak{p}_1=n-2=\operatorname{dim}\mathfrak{p}_2=d_2$ and $d_3=\operatorname{dim}\mathfrak{p}_3=1$ with $d=2n-3$.

\medskip

The  main result of this paper is contained in the following theorem.

\begin{theorem}\label{thm_371124}
On  every Stiefel manifold
$\operatorname{SO}(n)/\operatorname{SO}(n-2)$ with $n\ge 3$
the normalized Ricci flow~\eqref{ricciflow}
evolves any  metric~\eqref{metricStiffel_V_2(R^n)} into metrics with positive Ricci curvature. \end{theorem}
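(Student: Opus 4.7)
My first step is to reduce the three‑dimensional autonomous system~\eqref{decom_ricciflow_general} (with $k=3$ and $d_1=d_2=n-2$, $d_3=1$) to a planar system by exploiting volume preservation. A direct check using \eqref{ricciflow} shows that the Riemannian volume is preserved by the normalized Ricci flow, and in the coordinates $x_1,x_2,x_3$ this is the conservation of $V(x_1,x_2,x_3):=x_1^{n-2}x_2^{n-2}x_3$. Hence every orbit lies on exactly one of the two‑dimensional surfaces $\Sigma_c:=\{V=c\}$ mentioned in the abstract, and since each Ricci component $\mathbf r_i$ is homogeneous of degree $-1$ in $(x_1,x_2,x_3)$, both $\mathscr R_+$ and its complement in $\mathbb R_+^3$ are cones; in particular the shape of $\Sigma_c\cap\mathscr R_+$ does not essentially depend on $c$.

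Next, using the explicit expressions for $\mathbf r_1,\mathbf r_2,\mathbf r_3$ on $\operatorname{SO}(n)/\operatorname{SO}(n-2)$ taken from~\cite{Kerr,Stat16}, I would prove that $\Sigma_c\cap\mathscr R_+$ is forward invariant. The crucial calculation is that on each boundary piece of the form $\{\mathbf r_i=0,\ \mathbf r_j>0,\ \mathbf r_k>0\}$ the derivative $\dot{\mathbf r}_i=\sum_\ell \partial_\ell\mathbf r_i\cdot\dot x_\ell$, computed along~\eqref{decom_ricciflow_general}, is strictly positive. The vanishing of $\mathbf r_i$ kills several terms and reduces the inequality to a polynomial expression in $x_1,x_2,x_3$ and~$n$; the $\mathfrak p_1\leftrightarrow\mathfrak p_2$ symmetry of the space (which swaps $x_1$ with $x_2$ and $\mathbf r_1$ with $\mathbf r_2$) lets me treat $i=1$ and $i=2$ by a single calculation and leaves $i=3$ as a separate but simpler subcase.

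To upgrade forward invariance to the assertion that every orbit actually enters~$\mathscr R_+$ (even from mixed or negative Ricci data, thereby covering \emph{any} initial metric as in the statement), I would study the restricted planar flow on~$\Sigma_c$. Its equilibria are, up to positive rescaling, exactly the $\operatorname{SO}(n)$‑invariant Einstein metrics on~$V_2\mathbb R^n$; by Kerr's classification~\cite{Kerr} these are finite in number and all lie inside $\mathscr R_+$. Combining this with a compactness argument on $\Sigma_c$ (noting that orbits cannot escape to the coordinate hyperplanes $\{x_i=0\}$ because on $\Sigma_c$ the remaining coordinates would have to blow up, which is ruled out by the explicit form of the flow) and with the inward‑pointing property from step~two, the region $\Sigma_c\setminus\mathscr R_+$ contains no equilibria and no orbit can remain in it for all positive time, so each orbit crosses $\partial\mathscr R_+$ exactly once and then stays in $\mathscr R_+$ forever, which is precisely Theorem~\ref{thm_371124}.

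The main obstacle will be the algebraic part of step~two: verifying $\dot{\mathbf r}_i>0$ on $\{\mathbf r_i=0\}\cap\mathbb R_+^3$ uniformly in $n\ge 3$. The Ricci formulas for $V_2\mathbb R^n$ carry $n$ as a parameter and produce polynomial inequalities of fairly large total degree in $x_1,x_2,x_3$; a workable approach is to eliminate one variable using the constraint $\mathbf r_i=0$, rewrite the resulting expression in the single ratio $x_1/x_2$ (or $x_3/x_1$, depending on the component), and then extract manifestly positive factors. A secondary difficulty is the extremely small‑$n$ case ($n=3$), where some submodules degenerate dimensionally and the generic polynomial manipulations may have to be redone by hand.
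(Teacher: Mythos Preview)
Your overall strategy---reduce to the invariant level sets~$\Sigma_c$ of the volume, prove that the vector field points into~$\mathscr R_+$ along each face of~$\partial\mathscr R_+$, and then use the absence of equilibria outside~$\mathscr R_+$ to force every orbit inside---is exactly the route the paper takes. Your Step~2 calculation $\dot{\mathbf r}_i=\sum_\ell\partial_\ell\mathbf r_i\,\dot x_\ell$ on $\{\mathbf r_i=0\}$ is literally the inner product $(\mathbf V,\nabla\mathbf r_i)$ that the paper evaluates in Lemma~\ref{lem_241024_2}; there the $\mathbf r_3$--face factors into two planes and is handled in one line, while on the conic pieces of $\{\mathbf r_1=0\}$ (equivalently $\{\mathbf r_2=0\}$) the sign reduces to a quartic in a single ratio which the paper settles by a Sturm--sequence count uniform in~$n\ge 4$. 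So your anticipated ``main obstacle'' is real, and ``extract manifestly positive factors'' will not be enough by itself.

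The genuine gap is in your Step~3. For $n\ge 4$ the unique equilibrium $\mathbf x^0\in\Sigma_c$ is a \emph{saddle} (eigenvalues $(n^2-5n+5)(n-2)^{-2}q^{-1}>0$ and $-q^{-1}<0$; see Lemma~\ref{singpo_Stiefel}), so generic orbits on~$\Sigma_c$ are \emph{not} bounded: they drift off along the unstable separatrix, with one coordinate tending to~$0$ and another to~$\infty$. Your claim that ``orbits cannot escape to the coordinate hyperplanes \ldots\ which is ruled out by the explicit form of the flow'' is therefore false as stated, and the intended Poincar\'e--Bendixson/compactness argument collapses. The paper closes this hole differently: it shows (Lemma~\ref{singpo_Stiefel}) that both separatrices of the saddle lie inside~$\mathscr R_+$ (they emanate from $\mathbf x^0\in\mathscr R_+$ and cannot cross~$\partial\mathscr R_+$ outward by Lemma~\ref{lem_241024_2}), and (Lemma~\ref{lem_Stif_curves}) that the boundary curves $\pi_1,\pi_2,\gamma_1,\gamma_2$ of $\Sigma_c\cap\mathscr R_+$ pinch together at infinity, so the exterior strips on~$\Sigma_c$ carry no equilibria, no closed orbits, and are squeezed by the inward--pointing field into~$\mathscr R_+$ in finite time. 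To make your outline complete you would need to replace the boundedness claim by this asymptotic analysis of~$\partial(\Sigma_c\cap\mathscr R_+)$ together with the linearization at~$\mathbf x^0$; the $n=3$ case is indeed special, but easier rather than harder, since there $\mathbf x^0$ is a stable node and your argument would actually go through.
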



\section{Results}

The following expressions  were found in~\cite{Stat}
for the principal Ricci curvatures of the metric~\eqref{metricStiffel_V_2(R^n)}:
\begin{eqnarray}\label{Riccival}\notag
{\bf r}_1 &=& \dfrac{1}{2x_1}-\dfrac{1}{4(n-2)}
\left(\dfrac{x_3}{x_1x_2}+\dfrac{x_2}{x_1x_3} -\dfrac{x_1}{x_2x_3}\right), \notag \\
{\bf r}_2 &=& \dfrac{1}{2x_2}-\dfrac{1}{4(n-2)}
\left(\dfrac{x_3}{x_1x_2}+\dfrac{x_1}{x_2x_3} -\dfrac{x_2}{x_1x_3}\right),  \\
{\bf r}_3 &=&\frac{1}{2x_3}-\dfrac{1}{4}
\left(\dfrac{x_1}{x_2x_3}+\dfrac{x_2}{x_1x_3} -\dfrac{x_3}{x_1x_2}\right). \notag
\end{eqnarray}

Then the scalar curvature $S_{\bf g}=d_1{\bf r}_1+d_2 {\bf r}_2+d_3{\bf r}_3$ takes the form
\begin{equation}\label{scalcurv}
S_{\bf g}=\dfrac{1}{2x_3}-\dfrac{1}{4}
\left(\dfrac{x_1}{x_2x_3}+\dfrac{x_2}{x_1x_3} +\dfrac{x_3}{x_1x_2}\right)
+\dfrac{n-2}{2}\left(\dfrac{1}{x_1}+\dfrac{1}{x_2}\right).
\end{equation}

Substituting  \eqref{Riccival} and \eqref{scalcurv} into~\eqref{decom_ricciflow_general}
the  following system of  ordinary differential equations can be obtained
on~$\operatorname{SO}(n)/\operatorname{SO}(n-2)$, $n\ge 3$:
\begin{equation}\label{three_equat_Stif}
\dot{x}_1 =f_1(x_1,x_2,x_3), \qquad
\dot{x}_2 =f_2(x_1,x_2,x_3), \qquad
\dot{x}_3 =f_3(x_1,x_2,x_3),
\end{equation}
where
\begin{eqnarray*}\label{f_i}\notag
f_1 &=& \frac{(n-1)\big(x_3^2+x_2^2\big)-(3n-5)x_1^2-2(n-2)
\Big[x_1x_2+(n-2)x_3x_1-(n-1)x_3x_2\Big]}{2(n-2)(2n-3)x_2x_3},  \\\notag
f_2 &=& \frac{(n-1)\big(x_3^2+x_1^2\big)-(3n-5)x_2^2-2(n-2)
\Big[x_1x_2+(n-2)x_3x_2-(n-1)x_3x_1\Big]}{2(n-2)(2n-3)x_1x_3}, \\
f_3 &=&\frac{(n-2)(x_1-x_2)^2+(n-2)(x_1+x_2)x_3-(n-1)x_3^2}{(2n-3)x_1x_2}.\notag
\end{eqnarray*}

\begin{figure}[h!]
\centering
\includegraphics[width=0.45\textwidth]{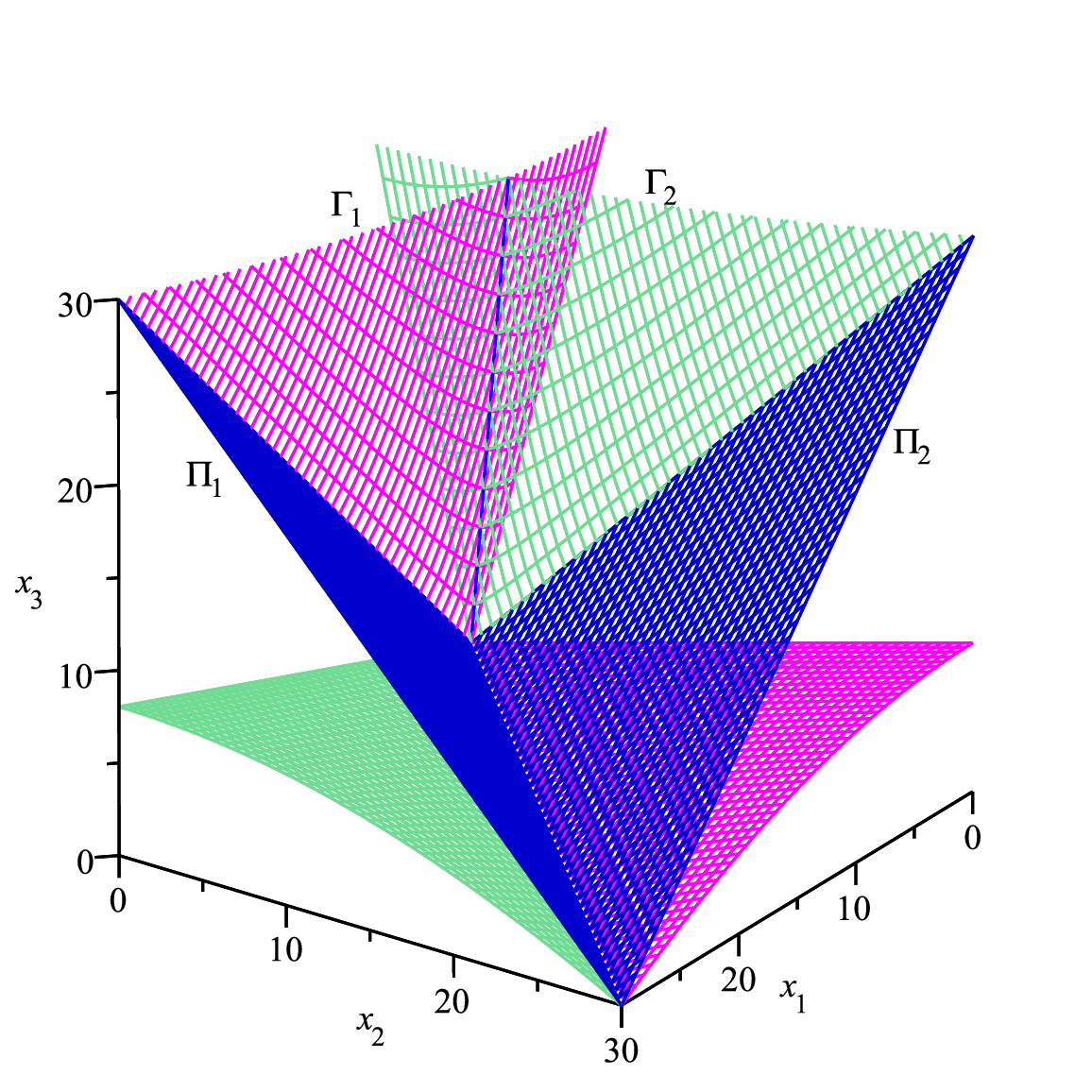}
\includegraphics[width=0.45\textwidth]{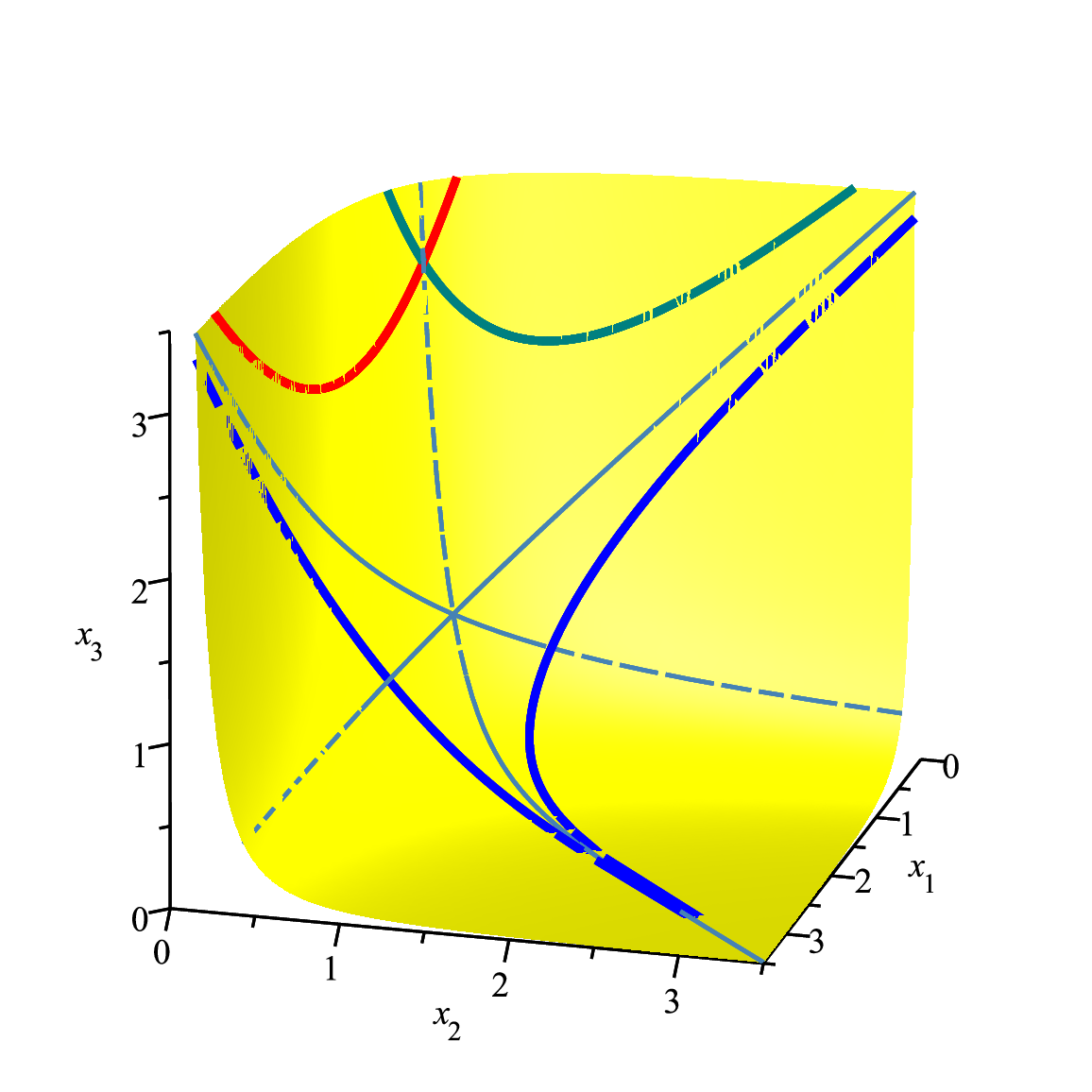}
\caption{The case $n=4$: the surfaces $\Gamma_1$, $\Gamma_2$ and
$\Pi_1, \Pi_2$ (the left panel); the  curves $\Sigma\cap \Gamma_1$, $\Sigma \cap\Gamma_2$ and
$\Sigma\cap \Pi_1$, $\Sigma \cap \Pi_2$ (the right panel)}
\label{pic-1}
\end{figure}

\subsection{The set  $\mathscr{R}_{+}$ and its structural properties}
Denote by $\mathscr{R}_{+}$ the set of metrics~\eqref{metricStiffel_V_2(R^n)}
which admit positive Ricci curvature
$$
\mathscr{R}_{+}=\big\{(x_1,x_2,x_3)\in \mathbb{R}_{+}^3 ~~ \big| ~~ {\bf r}_1>0, ~ {\bf r}_2>0, ~ {\bf r}_3>0\big\}.
$$
Observe that ${\bf r}_3 =0$ is equivalent to $(x_1-x_2+x_3)(x_2-x_1+x_3)=0$ for every $n\ge 3$.
Therefore~$\mathscr{R}_{+}$ is  bounded by two planes
$-x_1+x_2+x_3=0$ and $x_1-x_2+x_3=0$ in any case, where $x_1,x_2,x_3>0$. Denote them
respectively by~$\Pi_1$ and $\Pi_2$.
Let $\Gamma_1$ and $\Gamma_2$ be those components of the cones ${\bf r}_1=0$ and ${\bf r}_2=0$
 defined by the equations (see Figure~\ref{pic-1})
\begin{equation}\label{eqn_181124}
x_3=(n-2)x_2+\sqrt{x_1^2+(n-1)(n-3)x_2^2} \mbox{~~ and ~~}
x_3=(n-2)x_1+\sqrt{x_2^2+(n-1)(n-3)x_1^2}
\end{equation}
respectively.
Their another components
$$
x_3=(n-2)x_2-\sqrt{x_1^2+(n-1)(n-3)x_2^2} \mbox{~~ and ~~}
x_3=(n-2)x_1-\sqrt{x_2^2+(n-1)(n-3)x_1^2}.
$$
let us denote by $\Gamma_1^{-}$ and $\Gamma_2^{-}$
(the lower surfaces  in Figure~\ref{pic-1} respectively in magenta and aquamarine).
Let~$\Pi_3$ be the plane $x_1+x_2-x_3=0$ and $L$ be the straight line
$\big(p, p, (2n-4)p\big)$, $p>0$  (depicted in blue color in  Figure~\ref{pic-1}).

\begin{lemma}\label{lem_Stif_surfes}
For all $n\ge 3$
the set  $\mathscr{R}_{+}$ has the boundary
$
\partial (\mathscr{R}_{+})=
\begin{cases}
\Pi_1 \cup\Pi_2 \cup \widetilde{\Gamma}_1 \cup\widetilde{\Gamma}_2,      ~&\mbox{if} \;\; n\ge 4, \\
\Pi_1 \cup\Pi_2\cup \Pi_3, ~&\mbox{if} \;\; n=3
\end{cases}
$
with
$\widetilde{\Gamma}_1=\left\{(\mu t, \nu t, t) ~ \big| ~ t>0,\, 0<\nu\le  \widetilde{\nu}\right\}\subset \Gamma_1$ and
$\widetilde{\Gamma}_2=\left\{(\nu t, \mu t, t) ~ \big| ~ t>0,\, 0<\nu \le \widetilde{\nu}\right\}\subset \Gamma_2$,
where
$\mu=\sqrt{\nu^2-2(n-2)\nu+1}$ and $\widetilde{\nu}=(2n-4)^{-1}$.
Moreover,
$\Pi_1 \cap \Pi_2= \Pi_1 \cap \widetilde{\Gamma}_1= \Pi_2 \cap \widetilde{\Gamma}_2=\emptyset$ and
$\widetilde{\Gamma}_1\cap \widetilde{\Gamma}_2=L$ for all $n\ge 3$.
\end{lemma}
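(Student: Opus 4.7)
The plan is to analyze the three inequalities ${\bf r}_i>0$ one by one by clearing denominators, then study the relative positions of their zero sets inside $\mathbb{R}_{+}^3$. The observation ${\bf r}_3=0 \Leftrightarrow (x_1-x_2+x_3)(x_2-x_1+x_3)=0$, already in the excerpt, combined with $(x_1-x_2+x_3)+(x_2-x_1+x_3)=2x_3>0$, yields ${\bf r}_3>0 \Leftrightarrow x_3>|x_1-x_2|$, so the planes $\Pi_1,\Pi_2$ carry exactly the ${\bf r}_3$-constraint. Substituting $x_1=x_2+x_3$ (an equation of $\Pi_1$) into ${\bf r}_1,{\bf r}_2$ after clearing denominators gives $2(n-1)x_2x_3$ and $2(n-3)x_1x_3$, both nonnegative, so the entire portion of $\Pi_1$ inside $\mathbb{R}_{+}^3$ lies in $\partial\mathscr{R}_{+}$, and symmetrically for $\Pi_2$.

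Next, $4(n-2)x_1x_2x_3\,{\bf r}_1 = -x_3^2+2(n-2)x_2x_3+x_1^2-x_2^2$; as a quadratic in $x_3$ this gives ${\bf r}_1>0 \Leftrightarrow \Gamma_1^-<x_3<\Gamma_1$ with the roots as in \eqref{eqn_181124}. I would then show the lower component $\Gamma_1^-$ is redundant: $\Gamma_1^->0$ forces $x_2>x_1$, and in that case the inequality $\Gamma_1^-\le x_2-x_1$ reduces, after moving $(n-3)x_2+x_1$ to the left and squaring, to $(n-3)x_2(x_1-x_2)\le 0$, which holds; so on $\{{\bf r}_3>0\}$ one automatically has $x_3>\Gamma_1^-$, and symmetrically for $\Gamma_2^-$. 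Hence for $n\ge 4$ only the upper cones $\Gamma_1,\Gamma_2$ contribute non-planar boundary, and the effective upper envelope is $\min(\Gamma_1,\Gamma_2)$.

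To identify the relevant portions, set $A=\sqrt{x_1^2+(n-1)(n-3)x_2^2}$ and $B=\sqrt{x_2^2+(n-1)(n-3)x_1^2}$. One computes $A^2-B^2=-(x_1^2-x_2^2)\bigl[(n-2)^2-2\bigr]$, which for $n\ge 4$ has the sign of $x_2^2-x_1^2$; together with the sign of $(n-2)(x_2-x_1)$ this yields $\Gamma_1\le\Gamma_2 \Leftrightarrow x_1\ge x_2$, with equality exactly on $\{x_1=x_2\}$. Thus $\widetilde{\Gamma}_1=\Gamma_1\cap\{x_1\ge x_2\}$; by homogeneity, setting $x_3=t$, $x_1=\mu t$, $x_2=\nu t$, the equation of $\Gamma_1$ collapses to $1-(n-2)\nu=\sqrt{\mu^2+(n-1)(n-3)\nu^2}$, which on squaring yields $\mu=\sqrt{\nu^2-2(n-2)\nu+1}$; the constraint $\mu\ge\nu$ is equivalent to $\nu\le\widetilde{\nu}=(2n-4)^{-1}$, recovering the claimed parametrization (symmetrically for $\widetilde{\Gamma}_2$).

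Finally, the intersection claims are direct. $\Pi_1\cap\Pi_2$ forces $2x_3=0$ and is empty in $\mathbb{R}_{+}^3$. For $\Pi_1\cap\widetilde{\Gamma}_1$, the equation of $\Pi_1$ in rescaled coordinates is $\mu=1+\nu$, contradicting $\mu^2=1-\nu\bigl(2(n-2)-\nu\bigr)<1$ for $\nu>0$; similarly $\Pi_2\cap\widetilde{\Gamma}_2=\emptyset$. For $\widetilde{\Gamma}_1\cap\widetilde{\Gamma}_2$, matching the two parametrizations gives the system $\mu^2=\nu^2-2(n-2)\nu+1$ and $\nu^2=\mu^2-2(n-2)\mu+1$; subtracting forces $\mu=\nu$ or $\mu+\nu=n-2$, and the second is ruled out since $\mu,\nu\le\widetilde{\nu}=\tfrac{1}{2(n-2)}$ gives $\mu+\nu\le\tfrac{1}{n-2}<n-2$ for $n\ge 4$, leaving $\mu=\nu=\widetilde{\nu}$ and hence the line $L$. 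The case $n=3$ is degenerate: $(n-1)(n-3)=0$ collapses both cones onto the single plane $\Pi_3\colon x_3=x_1+x_2$, reducing the analysis to elementary linear inequalities. The main obstacle is the comparison $\Gamma_1\le\Gamma_2\Leftrightarrow x_1\ge x_2$, which is the mechanism splitting the upper boundary into $\widetilde{\Gamma}_1,\widetilde{\Gamma}_2$ meeting along the symmetry line $L$; the rest is routine algebraic bookkeeping.
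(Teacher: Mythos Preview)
Your argument is correct and follows essentially the same strategy as the paper: identify the zero sets of ${\bf r}_1,{\bf r}_2,{\bf r}_3$, parametrize the relevant cone components, and verify the intersection claims by elementary algebra. The parametrization of $\Gamma_1$ and the identification of $\widetilde{\nu}=(2n-4)^{-1}$ via the condition $\mu=\nu$ match the paper's treatment exactly, and your handling of $\Pi_1\cap\Pi_2$, $\Pi_i\cap\widetilde{\Gamma}_i$, and the case $n=3$ is the same in substance.

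Two points where you go beyond the paper and make it more rigorous are worth flagging. First, the paper merely asserts that the lower branches $\Gamma_1^{-},\Gamma_2^{-}$ are ``extra''; you actually prove this by showing $\Gamma_1^{-}\le x_2-x_1$ whenever $\Gamma_1^{-}>0$, so that the constraint $x_3>|x_1-x_2|$ (from ${\bf r}_3>0$) already dominates. Second, the paper locates $\widetilde{\Gamma}_1,\widetilde{\Gamma}_2$ by first solving the system ${\bf r}_1={\bf r}_2=0$ to find $\Gamma_1\cap\Gamma_2=L$ and then declaring ``it is easy to see'' that $0<\nu\le\widetilde{\nu}$ is the useful range; you instead prove the pointwise comparison $\Gamma_1\le\Gamma_2\Leftrightarrow x_1\ge x_2$ via $A^2-B^2=-(x_1^2-x_2^2)\bigl[(n-2)^2-2\bigr]$, which identifies $\widetilde{\Gamma}_1=\Gamma_1\cap\{x_1\ge x_2\}$ directly as the binding part of the upper envelope $\min(\Gamma_1,\Gamma_2)$. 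This comparison is the cleaner mechanism and makes transparent why the two pieces meet exactly along $L$. Your verification that ${\bf r}_1,{\bf r}_2\ge 0$ on $\Pi_1$ (giving $2(n-1)x_2x_3$ and $2(n-3)x_1x_3$ after clearing denominators) is also a check the paper omits.
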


\begin{proof}
Let us consider $\Gamma_1\cap \Gamma_2$.
The system of equations
$
\begin{cases}
x_1^2-x_2^2-x_3^2+2(n-2)x_2x_3=0, \\
x_1^2-x_2^2+x_3^2-2(n-2)x_1x_3=0
\end{cases}
$
is equivalent to the system of ${\bf r}_1=0$ and ${\bf r}_2=0$.
It is easy to see that  $x_3=(n-2)(x_1+x_2)$ is a consequent of this system
corresponding to  $\Gamma_1\cap \Gamma_2$.
Substituting it into the second equation in the system
we obtain
$$
(n-1)(n-3)(x_1+x_2)(x_1-x_2)=0.
$$
In what follows that $x_1=x_2$ if $n\ge  4$, hence the straight line
$x_1=x_2=p>0$, $x_3=2(n-2)p$ denoted by~$L$ is a common part of the cones $\Gamma_1$ and $\Gamma_2$.
If $n=3$ then the plane $\Pi_3$ will replace both $\Gamma_1$ and~$\Gamma_2$
(the identity $\Gamma_1=\Gamma_2$  follows also from~\eqref{eqn_181124} at $n=3$).
The  system above admits also the solution $x_1= x_2$, $x_3=0$ non permissible for our aims.
Obviously it corresponds to  $\Gamma_1^{-}\cap \Gamma_2^{-}$.

The component $\Gamma_1$ of the cone ${\bf r}_1=0$ can be parameterized as
\begin{equation*}\label{param_cone_stif}
x_1=\mu t, \quad x_2=\nu t, \quad x_3=t,   \qquad t>0, \qquad 0<\nu < l,
\end{equation*}
where $\mu=\sqrt{\nu^2-2(n-2)\nu+1}$ and
\begin{equation}\label{eqn_211124}
l=n-2-\sqrt{(n-1)(n-3)}
\end{equation}
is the smallest root of the quadratic equation $\nu^2-2(n-2)\nu+1=0$ (by symmetry for the component~$\Gamma_2$ of~${\bf r}_2=0$  we have $x_2=\mu t$, $x_1=\nu t$, $x_3=t$).

For all $n\ge 4$ the conic components $\Gamma_1$ and $\Gamma_2$ have  extra pieces  not relating to the set  $\mathscr{R}_{+}$.
Since $\Gamma_1 \cap \Gamma_2=L$
then necessarily $\mu=\nu$ implying the critical value
$\widetilde{\nu}=(2n-4)^{-1}$.
It is easy to see that the useful part $\widetilde{\Gamma}_1$ of $\Gamma_1$ can be parameterized
by values $t>0$ and $0<\nu\le \widetilde{\nu}$,
as for the extra part of~$\Gamma_1$  then $\widetilde{\nu} <\nu < l$ and $t>0$
correspond to it.
The description of $\widetilde{\Gamma}_2$ is obvious from symmetry.
The equality $\widetilde{\Gamma}_1 \cap \widetilde{\Gamma}_2=L$ is obvious too.
The system of the equations $-x_1+x_2+x_3=0$ and $x_1-x_2+x_3=0$ implies~$x_3=0$.
Analogously, it follows from $\Pi_1 \cap \Gamma_1$ and $\Pi_2 \cap \Gamma_2$ that $x_2=0$ and $x_1=0$. Therefore $\Pi_1 \cap \Pi_2= \Pi_1 \cap \Gamma_1= \Pi_2 \cap \Gamma_2=\emptyset$
for $x_i>0$ implying $\Pi_1 \cap \Pi_2= \Pi_1 \cap \widetilde{\Gamma}_1= \Pi_2 \cap \widetilde{\Gamma}_2=\emptyset$.
\end{proof}

\subsection{The vector field ${\bf V}$ on the boundary  $\partial(\mathscr{R}_{+})$}
Introduce the vector field
$$
{\bf V} \colon {\bf x}\mapsto \big(f_1({\bf x}), f_2({\bf x}), f_3({\bf x})\big),
\quad {\bf x}=(x_1,x_2,x_3)\in \mathbb{R}_{+}^3,
$$
associated with the differential system~\eqref{three_equat_Stif}.

\begin{lemma}\label{lem_241024_2}
For every $n\ge 3$ the vector field ${\bf V}$
associated with the differential system~\eqref{three_equat_Stif}
is directed into the domain $\mathscr{R}_{+}$ on every point of its
boundary~$\partial(\mathscr{R}_{+})$.
\end{lemma}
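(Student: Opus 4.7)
The plan is to work with the decomposition of $\partial(\mathscr{R}_{+})$ provided by Lemma~\ref{lem_Stif_surfes}, choosing for each piece a smooth defining function $\Phi$ with $\Phi>0$ on $\mathscr{R}_{+}$, and verifying $\dot\Phi>0$ on $\{\Phi=0\}\cap\partial(\mathscr{R}_{+})$. The main tool is the identity $\dot x_i=-2x_i({\bf r}_i-S_{\bf g}/d)$, combined with the fact that on each boundary piece at least one principal Ricci curvature vanishes, which kills several terms.

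\emph{Planes $\Pi_1,\Pi_2$ (and $\Pi_3$ when $n=3$).} On $\Pi_1$ take $F_1:=-x_1+x_2+x_3$, whose gradient $(-1,1,1)$ is the inward normal into $\mathscr{R}_{+}$. A short computation gives
\[
\dot F_1 \;=\; 2\bigl(x_1{\bf r}_1-x_2{\bf r}_2-x_3{\bf r}_3\bigr) - \tfrac{2S_{\bf g}}{d}(x_1-x_2-x_3),
\]
and the second summand vanishes because $F_1=0$. Substituting $x_1=x_2+x_3$ into~\eqref{Riccival} gives the neat values $x_1{\bf r}_1=\tfrac{n-1}{2(n-2)}$, $x_2{\bf r}_2=\tfrac{n-3}{2(n-2)}$ and ${\bf r}_3=0$, so $\dot F_1 = 2/(n-2)>0$ for every $n\ge 3$; the plane $\Pi_2$ is obtained by the $x_1\!\leftrightarrow\!x_2$ symmetry. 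When $n=3$ the cones $\Gamma_1,\Gamma_2$ both collapse onto $\Pi_3$; setting $F_3:=x_1+x_2-x_3$ and using ${\bf r}_1={\bf r}_2=0$, $x_3{\bf r}_3=1$ on $\Pi_3\cap\{n=3\}$, one gets $\dot F_3=2>0$ by the same method.

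\emph{Cones $\widetilde{\Gamma}_1,\widetilde{\Gamma}_2$ when $n\ge 4$.} On $\widetilde{\Gamma}_1$ let $G_1:=x_1^2-x_2^2-x_3^2+2(n-2)x_2x_3$, so that ${\bf r}_1=G_1/\bigl(4(n-2)x_1x_2x_3\bigr)$ and the interior of $\mathscr{R}_{+}$ lies on $\{G_1>0\}$. Differentiating along the flow and collecting the $S_{\bf g}/d$-contributions into $\tfrac{4S_{\bf g}}{d}\,G_1$, which vanishes on $\{G_1=0\}$, one is left with
\[
\dot G_1 \;=\; 4x_2\bigl[x_2-(n-2)x_3\bigr]{\bf r}_2 + 4x_3\bigl[x_3-(n-2)x_2\bigr]{\bf r}_3.
\]
By scale invariance of the $f_i$ (each is homogeneous of degree~$0$) we may fix $x_3=1$ and use the parameterization from Lemma~\ref{lem_Stif_surfes}: $x_1=\mu$, $x_2=\nu$, $\mu=\sqrt{\nu^2-2(n-2)\nu+1}$, $\nu\in(0,(2n-4)^{-1}]$. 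Inserting the resulting expressions for ${\bf r}_2$ and ${\bf r}_3$ into the display above, and using the identity $1-(\mu-\nu)^2=2\nu[\mu-\nu+(n-2)]$ (a rewriting of the defining relation for $\mu$), the positivity of $\dot G_1$ reduces to the single inequality
\[
P(\nu,n) \;:=\; (n-1)\bigl[(n-2)\nu-1\bigr]\bigl[\nu-(n-2)\bigr] - (n-3)(n-2)(1+\nu)\mu \;>\; 0.
\]

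\emph{The main obstacle.} In the admissible range the first summand of $P$ is positive (both bracketed factors are negative for $n\ge 4$) while the second is strictly negative, so the whole difficulty is to show the positive piece dominates throughout the interval. At the two endpoints the check is elementary: $P(0^+,n)=2(n-2)$ and $P((2n-4)^{-1},n)=(n-2)^2/2$, yielding $\dot G_1$-values $4$ and $2(n-2)^2$ respectively. For the interior, I would isolate $\mu$ in the inequality $P>0$, square both sides (both are positive), substitute $\mu^2=\nu^2-2(n-2)\nu+1$ to eliminate $\mu$, and obtain a polynomial inequality in $\nu$ of degree~$4$ whose coefficients are polynomial in $n$. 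Producing a clean factorization or sum-of-squares decomposition valid uniformly for $n\ge 4$ and $\nu\in[0,(2n-4)^{-1}]$ is where the technical work lies. The cone $\widetilde{\Gamma}_2$ follows by the $x_1\!\leftrightarrow\!x_2$ symmetry.
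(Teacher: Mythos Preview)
Your reduction is exactly the paper's: the plane computations giving $2/(n-2)$ and $2$ match verbatim, and your $P(\nu,n)$ coincides with the paper's $F-G$, where $F=-(n-1)(n-2-\nu)\bigl((n-2)\nu-1\bigr)$ and $G=(n-2)(n-3)(\nu+1)\mu$. The only difference is that you leave the quartic inequality $F^2-G^2>0$ open, hoping for a factorization or sum-of-squares; the paper instead computes the discriminant of $p(\nu):=F^2-G^2$ (positive for $n\ge 4$, so no multiple roots), builds the Sturm sequence $p_0,\dots,p_4$, and checks one sign change at each of $\nu=0$ and $\nu=1$, concluding $p(\nu)>0$ on $[0,1]\supset\bigl(0,(2n-4)^{-1}\bigr]$.
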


\begin{proof}
By Lemma~\ref{lem_Stif_surfes} the planes $\Pi_1$ and $\Pi_2$
bound the domain $\mathscr{R}_{+}$ for every $n\ge 3$.
Denote by ${\bf n}_1$ and~${\bf n}_2$  their normals
$(-1,1,1)$ and $(1,-1,1)$.
We claim that the inner product
$({\bf V}, {\bf n}_1)$ is positive on every point of~$\Pi_1$ for all $n\ge 3$.
Indeed $\Pi_1$ can be parameterized as $x_2=u$, $x_3=v$, $x_1=u+v$ with $u,v>0$.
Then
$$
({\bf V}, {\bf n}_1)=-f_1\big|_{\Pi_1}+f_2\big|_{\Pi_1}
+f_3\big|_{\Pi_1}=\frac{2}{n-2}>0
$$
on $\Pi_1$ for $n\ge 3$, where
\begin{eqnarray*}\label{f_p}\notag
f_1\big|_{\Pi_1} &=& \frac{(n-2)(n-3)v-(3n-5)u}{(2n-3)(n-2)u},\notag \\
f_2\big|_{\Pi_1} &=&\frac{(n-1)\big(u-(n-3)v\big)}{(2n-3)(n-2)(u+v)},  \\
f_3\big|_{\Pi_1} &=& \frac{v}{2n-3}\frac{2(n-2)u+(n-3)v}{(u+v)u}. \notag
\end{eqnarray*}

The same inequality
$({\bf V}, {\bf n}_2)=\frac{2}{n-2} >0$ holds on the plane $\Pi_2$ for all $n\ge 3$.
Indeed $\Pi_2$ can be parameterized as  $x_1=u$, $x_3=v$, $x_2=u+v$ and
$f_1\big|_{\Pi_2}= f_2\big|_{\Pi_1}$,
$f_2\big|_{\Pi_2}= f_1\big|_{\Pi_1}$,
$f_3\big|_{\Pi_2}= f_3\big|_{\Pi_1}$.

\smallskip

{\it The case $n=3$}.  According to Lemma~\ref{lem_Stif_surfes} we have one more plane $\Pi_3$
that bounds the domain $\mathscr{R}_{+}$. Note that $f_1, f_2$ and $f_3$ take the following forms on $\Pi_3$:
$$
f_1\big|_{\Pi_3} =\frac{u}{u+v}, \qquad f_2\big|_{\Pi_3}=\frac{v}{u+v}, \qquad
f_3\big|_{\Pi_3} = -\frac{4}{3}.
$$
Then we obtain
$({\bf V}, {\bf n}_3)=f_1\big|_{\Pi_3}+f_2\big|_{\Pi_3}
-f_3\big|_{\Pi_3}=2 >0$ on $\Pi_3$, where ${\bf n}_3=(1,1,-1)$ is the normal
of $\Pi_3$.

Since the normals ${\bf n}_1$,   ${\bf n}_2$  and ${\bf n}_3$
are directed inside the domain~$\mathscr{R}_{+}$
the inequalities $({\bf V}, {\bf n}_i) >0$ imply
that the vector field ${\bf V}$ is directed towards~$\mathscr{R}_{+}$
on every point of the boundary $\partial(\mathscr{R}_{+})= \Pi_1\cup\Pi_2\cup \Pi_3$.

\smallskip

{\it The case $n\ge 4$}. Then $\partial (\mathscr{R}_{+})= \Pi_1 \cup\Pi_2 \cup \widetilde{\Gamma}_1 \cup \widetilde{\Gamma}_2$ by Lemma~\ref{lem_Stif_surfes}.
It suffices consider the surface
$\widetilde{\Gamma}_1=\left\{(\mu t, \nu t, t)~ | ~ t>0,\, \nu\in \big(0, \widetilde{\nu}\big]\right\}$,
where $\widetilde{\nu}=(2n-4)^{-1}$.

The components of the normal ${\bf m}=\nabla {\bf r}_1=\left(\frac{\partial{\bf r}_1}{\partial x_1}, \frac{\partial{\bf r}_1}{\partial x_2}, \frac{\partial{\bf r}_1}{\partial x_3}\right)$ to the conic component~$\Gamma_1$ have the forms
\begin{eqnarray*}
\frac{\partial{\bf r}_1}{\partial x_1}&=&2x_1>0, \\
\frac{\partial{\bf r}_1}{\partial x_2}&=&2(n-2)x_3-2x_2, \\
\frac{\partial{\bf r}_1}{\partial x_2}&=&2(n-2)x_2-2x_3.
\end{eqnarray*}
It is easy to check that
$ \frac{1}{2(n-2)}< n-2-\sqrt{(n-1)(n-3)}\le \frac{1}{n-2}$ for $n\ge 3$.
Then the inequalities $0<\nu\le \widetilde{\nu} <l< \frac{1}{n-2}<n-2$
yield
$$
\frac{\partial{\bf r}_1}{\partial x_2}=2t\,(n-2-\nu)>0, \qquad
\frac{\partial{\bf r}_1}{\partial x_3}=2t\,\big((n-2)\nu-1\big)<0
$$
on $\Gamma_1$ and, in particular, on $\widetilde{\Gamma}_1$
 implying that  ${\bf m}$ is directed inside the domain $\mathscr{R}_{+}$ for all $n\ge 4$.

We claim that the inner product $({\bf V}, {\bf m})$ is
positive on every point of~$\widetilde{\Gamma}_1$.
Indeed by calculations in Maple we obtain that
$\operatorname{sign}({\bf V}, {\bf m})=\operatorname{sign} (F-G)$,
where
\begin{eqnarray*}
F=F(\nu)&:=&-(n-1)(n-2-\nu)\big((n-2)\nu-1\big), \\
G=G(\nu)&:=&(n-2)(n-3)(\nu+1)\sqrt{\nu^2-2(n-2)\nu+1}.
\end{eqnarray*}
Since $F>0$ and $G>0$ for all $n\ge 4$ and all  $\nu\in \big(0, \widetilde{\nu}\big]$
the sign of $F-G$ coincides with the sign of the polynomial
\begin{multline*}
p(\nu):=F^2-G^2=4(n-2)^3\nu^4-2(n-2)(5n^3-31n^2+67n-49)\nu^3\\+
(n^6-6n^5-5n^4+136n^3-453n^2+630n-327)\nu^2\\
-2(n-2)(5n^3-31n^2+67n-49)\nu+4(n-2)^3.
\end{multline*}

We claim that $p(\nu)>0$ for all $\nu\in \big(0, \widetilde{\nu}\big]$ and all $n\ge 4$.
It is more convenient to consider the interval  $[0,1]\supset  \big(0, \widetilde{\nu}\big]$
instead of $\big(0, \widetilde{\nu}\big]$.
Since the discriminant
$$
(n-2)^4(n-3)^9(n-1)^{10}(4n^2-13n+11)^2\left[8+20(n-3)+12(n-3)^2+(n-3)^3\right]
$$
of the polynomial $p(\nu)$
is positive for every $n\ge 4$ then $p(\nu)$ has no multiple roots in $[0,1]$.
Therefore  we can use Sturm's method for our goal.
Construct the sequence of  Sturm polynomials
$p_0=p$, $p_1=p_0'$, $p_{i+1}=-\operatorname{rem}(p_{i-1},p_i)$,
where $\operatorname{rem}(p_{i-1},p_i)$ is the remainder of the division of $p_{i-1}$
by $p_i$, $i=1,2,3$. Then for $\nu=0$ we obtain using Maple that
\begin{eqnarray*}
p_0(0) &=& 4(n-2)^3>0,\\
p_1(0) &=& -2(n-2)\alpha_1<0,\\
p_2(0) &=& \frac{(n-1)(5n-9)(n-3)^2(5n^2-18n+17)}{16(n-2)}>0,\\
p_3(0) &=& 32(n-1)(n-2)^2(n-3)(4n^2-13n+11)\, \frac{\alpha_1\alpha_2}{\alpha_3^2}>0,\\
p_4(t)&\equiv & \frac{(n-1)^6(n-2)(n-3)^3}{16}
\left(\frac{\alpha_3}{\alpha_4}\right)^2\alpha_5>0,
\end{eqnarray*}
where
\begin{eqnarray*}
\alpha_1 &=& 5(n-3)^3+14(n-3)^2+16(n-3)+8>0,\\
\alpha_2 &=& (n-3)^3+10(n-3)^2+20(n-3)+12>0,\\
\alpha_3&=&8n^4-83n^3+285n^2-413n+219\ne 0~ (\mbox{the divisor 73 of 219 is not a root}),  \\
\alpha_4&=&n^9-5n^8-24n^7+256n^6-802n^5+930n^4+692n^3-3268n^2+3589n-1401\ne 0,\\
\alpha_5&=&(n-3)^3+12(n-3)^2+20(n-3)+8>0
\end{eqnarray*}
for $n\ge 4$, where
$\alpha_4\ne 0$ for $n\ge 4$ because of the unique acceptable divisor $n=467$ of $1401$
does not satisfy $\alpha_4=0$.

Evaluate now $p_i$ at $\nu=1$:
\begin{eqnarray*}
p_0(1) &=& (n-3)^3 \alpha_5>0,\\
p_1(1) &=& 2(n-3)^3 \alpha_5>0,\\
p_2(1) &=& -\frac{(n-1)(n-3)^2(5n^2-18n+17) \alpha_5}{16(n-2)^2}<0,\\
p_3(1) &=&- 32(n-1)(n-2)(n-3)^2(4n^2-13n+11)\, \frac{\alpha_5\alpha_6}{\alpha_3^2}<0,\\
\end{eqnarray*}
where
$\alpha_6=24+60(n-3)+63(n-3)^2+35(n-3)^3+10(n-3)^4+(n-3)^5>0$.

It follows then   the Sturm sequence
has the same number of sign changes equal to $1$ for both $\nu=0$ and $\nu=1$
under the condition $n\ge 4$.
Therefore $p(\nu)$ has no real roots in the interval $[0,1]$ for all $n\ge 4$.
Since $p(0)>0$ we have  $p(\nu)>0$ for $\nu\in [0,1]$,
in particular, for  $\nu\in \big(0, \widetilde{\nu}\big]$.
Therefore $({\bf V}, {\bf m})>0$ on~$\widetilde{\Gamma}_1$.
Since the normal ${\bf m}$
is directed into the domain~$\mathscr{R}_{+}$ as shown above
the vector field~${\bf V}$ is directed towards~$\mathscr{R}_{+}$
on the surface~$\widetilde{\Gamma}_1$.
By symmetry the similar assertion holds for ${\bf V}$ on the surface~
$\widetilde{\Gamma}_2\subset \Gamma_2$.
\end{proof}

\subsection{Invariant sets and singular points of the system~\eqref{three_equat_Stif}}

Let $\Sigma$ be the surface defined by the equation $\operatorname{Vol}(x_1,x_2,x_3)=c$, $c>0$,
where
$$
\operatorname{Vol}(x_1,x_2,x_3)=x_1^{n-2}x_2^{n-2}x_3
$$
is the volume function for the metric~\eqref{metricStiffel_V_2(R^n)}.
Introduce  also the following subsets of $\Sigma$  (curves on $\Sigma$):
\begin{eqnarray*}
I_1&=&\left\{\left(c^{\tfrac{1}{n-2}}\,\tau^{\tfrac{1-n}{n-2}}, \tau, \tau\right)\in \mathbb{R}_{+}^3 ~ \big| ~ \tau>0\right\}, \\
I_2&=&\left\{\left(\tau, c^{\tfrac{1}{n-2}}\, \tau^{\tfrac{1-n}{n-2}}, \tau\right)\in \mathbb{R}_{+}^3
~ \big| ~ \tau>0\right\}, \\
I_3&=&\left\{\left(\tau, \tau, c\tau^{4-2n}\right)\in \mathbb{R}_{+}^3 ~ \big| ~ \tau>0\right\}.
\end{eqnarray*}

In Figure~\ref{pic-1} the curves $I_1, I_2, I_3$ are depicted in gold color for $c=1$ and $n=4$.

\begin{lemma}\label{invar_Stiefel}
The following assertions hold for the system~\eqref{three_equat_Stif}:
\begin{enumerate}
\item
The algebraic surface $\Sigma$ is invariant for every $n\ge 3$ and every $c>0$;
\item
The curve $I_3$ is invariant for  every $n\ge 3$ and every $c>0$;
\item
The curves $I_1$ and $I_2$ are also invariant for every $c>0$ if $n=3$.
\item
For all  $n\ge 3$ the inclusions are true:
$$
\begin{cases}
I_1, I_2\subset \mathscr{R}_{+}, &\mbox{if} \;\; \tau>\tau_1:=2^{2-n}c, \\
I_3\subset \mathscr{R}_{+}, &\mbox{if} \;\; \tau>\tau_2:=\left(\dfrac{2n-4}{c}\right)^{\tfrac{1}{3-2n}}.
\end{cases}
$$
\end{enumerate}
\end{lemma}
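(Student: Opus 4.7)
\emph{Plan.} The lemma splits into three invariance statements (assertions~1--3) and a positivity statement for the Ricci curvature (assertion~4). My strategy is to handle the first three via volume preservation and symmetries of the Ricci formulas~\eqref{Riccival}, and then to extract the thresholds $\tau_1, \tau_2$ in~(4) by substituting each parameterization directly into~\eqref{Riccival}.

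\emph{Assertion~1} is the classical volume-preservation property of the normalized Ricci flow, specialized to the present setting. I will take the logarithmic time derivative
$$
\frac{d}{dt}\ln \operatorname{Vol}(x_1, x_2, x_3) = d_1\frac{\dot x_1}{x_1} + d_2\frac{\dot x_2}{x_2} + d_3\frac{\dot x_3}{x_3},
$$
substitute $\dot x_i/x_i = -2({\bf r}_i - S_{\bf g}/d)$ from~\eqref{decom_ricciflow_general} with $d=d_1+d_2+d_3=2n-3$, and observe that the resulting weighted sum telescopes to $-2(d_1{\bf r}_1+d_2{\bf r}_2+d_3{\bf r}_3 - S_{\bf g}) = 0$. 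Hence $\operatorname{Vol}$ is a first integral and $\Sigma = \{\operatorname{Vol}=c\}$ is flow-invariant for every $c>0$.

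\emph{Assertions~2 and~3.} The idea is to realize each $I_j$ as the intersection of $\Sigma$ with a diagonal plane and to verify that the plane itself is flow-invariant using symmetry of~\eqref{Riccival}. For the exchange $x_1\leftrightarrow x_2$, formulas~\eqref{Riccival} show that ${\bf r}_1\leftrightarrow {\bf r}_2$ while ${\bf r}_3$ is fixed, so ${\bf r}_1={\bf r}_2$ on the plane $\{x_1=x_2\}$; by~\eqref{decom_ricciflow_general} this forces $\dot x_1=\dot x_2$ there, and uniqueness of solutions makes the plane flow-invariant, yielding the invariance of $I_3=\Sigma\cap\{x_1=x_2\}$. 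When $n=3$ the prefactor $\tfrac{1}{4(n-2)}$ in ${\bf r}_1, {\bf r}_2$ coincides with the prefactor $\tfrac{1}{4}$ in ${\bf r}_3$, upgrading this $\mathbb{Z}_2$-symmetry to a full $S_3$-equivariance of the triple $({\bf r}_1, {\bf r}_2, {\bf r}_3)$ under permutations of $(x_1, x_2, x_3)$. All three diagonal planes $\{x_i=x_j\}$ are then flow-invariant, and intersecting with $\Sigma$ gives the invariance of $I_1$ and $I_2$; this upgrade genuinely fails for $n\ge 4$, which is why the conclusion is restricted to $n=3$.

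\emph{Assertion~4 and main obstacle.} For each curve I substitute the parameterization into~\eqref{Riccival} and determine when all three Ricci components are simultaneously positive. Setting $x_2=x_3=\tau$ on $I_1$ produces
$$
{\bf r}_1=\frac{n-3}{2(n-2)x_1}+\frac{x_1}{4(n-2)\tau^2},\quad {\bf r}_2=\frac{1}{2\tau}-\frac{x_1}{4(n-2)\tau^2},\quad {\bf r}_3=\frac{1}{2\tau}-\frac{x_1}{4\tau^2},
$$
so ${\bf r}_1\ge 0$ is automatic and the binding condition is ${\bf r}_3>0 \Leftrightarrow x_1<2\tau$, which for $n\ge 3$ already implies ${\bf r}_2>0$; inserting $x_1=c^{1/(n-2)}\tau^{(1-n)/(n-2)}$ converts this into the threshold $\tau_1$. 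The case of $I_2$ follows by the $x_1\leftrightarrow x_2$ symmetry. On $I_3$ the substitution $x_1=x_2=\tau$ yields ${\bf r}_3=x_3/(4\tau^2)>0$ and ${\bf r}_1={\bf r}_2 = \tfrac{1}{2\tau}-\tfrac{x_3}{4(n-2)\tau^2}$, whose positivity reads $x_3<2(n-2)\tau$, and plugging in $x_3=c\tau^{4-2n}$ yields $\tau_2$. The main technical obstacle is precisely this substitution step: the exponents in the parameterizations of $I_1, I_2, I_3$ must be tracked carefully to land on exactly the thresholds stated, and one must verify that the condition imposed by ${\bf r}_3$ (on $I_1$) and by ${\bf r}_1={\bf r}_2$ (on $I_3$) is in each case strictly more restrictive than the remaining curvature conditions, so that a single scalar inequality on $\tau$ suffices.
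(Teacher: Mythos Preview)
Your approach is sound and, for assertions~2--3, takes a genuinely different route from the paper. The paper verifies invariance of $I_3$ by a direct tangency check, computing $\dfrac{d}{d\tau}\bigl(c\tau^{4-2n}\bigr) = f_3/f_1\big|_{I_3} = f_3/f_2\big|_{I_3}$, and handles $I_1, I_2$ at $n=3$ by the same device. Your symmetry argument is more conceptual: the involution $x_1\leftrightarrow x_2$ swaps ${\bf r}_1\leftrightarrow{\bf r}_2$ in~\eqref{Riccival} (and $d_1=d_2$), so the plane $\{x_1=x_2\}$ is flow-invariant, whence so is $I_3=\Sigma\cap\{x_1=x_2\}$; at $n=3$ the full $S_3$ equivariance yields the other two diagonal planes and hence $I_1,I_2$. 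This avoids evaluating the $f_i$ on the curves and explains structurally why $I_1,I_2$ fail to be invariant for $n\ge4$. For assertion~1 your logarithmic-derivative computation is exactly the paper's $(\nabla U,{\bf V})=0$ argument, written multiplicatively.

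For assertion~4 your plan coincides with the paper's: substitute the parameterization into~\eqref{Riccival} and isolate the binding inequality (the paper phrases ${\bf r}_3>0$ on $I_1$ via the plane condition $\Pi_1>0$, which is the same inequality $x_1<2\tau$). One caution on the step you defer: inserting $x_1=c^{1/(n-2)}\tau^{(1-n)/(n-2)}$ into $x_1<2\tau$ gives $\tau^{2n-3}>2^{2-n}c$, i.e.\ $\tau>(2^{2-n}c)^{1/(2n-3)}$, not $\tau>2^{2-n}c$ as stated. The paper's displayed expression $\Pi_1\big|_{I_1}=\tau\bigl(2-c^{1/(n-2)}\tau^{-1/(n-2)}\bigr)$ carries an exponent slip (the power of $\tau$ inside the bracket should be $(3-2n)/(n-2)$), which propagates to the stated $\tau_1$. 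Your method is correct and in fact detects this; by contrast, for $I_3$ your substitution reproduces the stated $\tau_2$ exactly.
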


\begin{proof}
1) Actually the first assertion can easily be proved for  system~\eqref{decom_ricciflow_general}
in general case. Put $\operatorname{Vol}=x_1^{d_1}\cdots x_k^{d_k}$ and $U:=\operatorname{Vol}-c$.
Then the invariance of $\Sigma$ means that the inner product $(\nabla U, {\bf V})$ vanishes on it.
Indeed
$$
(\nabla U, {\bf V})=\sum_{i=1}^k \frac{\partial U}{\partial x_i}f_i=(U+c)\sum_{i=1}^k \frac{d_i}{x_i}f_i=
-2(U+c)\sum_{i=1}^k d_i\left({\bf r}_i-\frac{S_{\bf g}}{d}\right)=0
$$
due to $S_{\bf g}=\sum_{i=1}^kd_i{\bf r}_i$ and $d=\sum_{i=1}^kd_i$ (see also Lemma~2 in~\cite{Ab_RM}).

\smallskip

2) The invariance of $I_3$ means that  the vector field ${\bf V}$ is parallel to the tangent of $I_3$ on every its point. The direct calculations confirm such a property:
$$
\dfrac{d}{d\tau}\left(c\tau^{4-2n}\right)
=-2c(n-2)\tau^{-2n+3}=
\frac{f_3}{f_1}\bigg|_{I_3}=
\frac{f_3}{f_2}\bigg|_{I_3}.
$$

\smallskip

3) Can be proved in the same manner. Observe only that $I_1, I_2$ and $I_3$ can be obtained  by cyclic permutations in $(\tau,\tau,c\tau^{-2})$  in the case $n=3$. Moreover,
${\bf x}^0=\big(\sqrt[3]{c}, \sqrt[3]{c}, \sqrt[3]{c}\big)$  by Lemma~\ref{singpo_Stiefel}.
But for  $n\ge 4$ the curves $I_1$ and $I_2$ can not pass through the point ${\bf x}^0$ and hence can not be invariant.

\smallskip

4) Choose $I_1$. Substituting  $x_1= c^{\tfrac{1}{n-2}}\tau^{\tfrac{1-n}{n-2}}$, $x_2=\tau$ and $x_3=\tau$
into $\Pi_1$ and $\Pi_2$ we obtain
\begin{equation}\label{Pi_1|I_1}
\Pi_1\big|_{I_1}=\tau\left(2-c^{\tfrac{1}{n-2}} \tau^{-\tfrac{1}{n-2}}\right), \qquad
\Pi_2\big|_{I_1}=c^{\tfrac{1}{n-2}} \tau^{-\tfrac{n-1}{n-2}}.
\end{equation}
Analogously ${\bf r}_1$ and ${\bf r}_2$ take the following  forms on  $I_1$:
\begin{equation}\label{r_i|I_1}
\begin{array}{l}
{\bf r}_1\big|_{I_1}=\dfrac{1}{4(n-2)} \left(2(n-3)
+c^{\tfrac{2}{n-2}}\tau^{-\tfrac{2}{n-2}}\right) \tau^{\tfrac{n-1}{n-2}} c^{-\tfrac{1}{n-2}},\\
{\bf r}_2\big|_{I_1}=\dfrac{1}{4(n-2)\tau}\left(2(n-2)-c^{\tfrac{1}{n-2}} \tau^{-\tfrac{1}{n-2}}\right).
\end{array}
\end{equation}
Then $\Pi_2\big|_{I_1}>0$ and ${\bf r}_1\big|_{I_1}>0$ for $\tau>0$ and $n\ge 3$.
But $\Pi_1\big|_{I_1}>0$ only for $\tau>\tau_1$, where $\tau_1:=2^{2-n}c$ is the single root of
the equation $2-c^{\tfrac{1}{n-2}} \tau^{-\tfrac{1}{n-2}}=0$.

Note that ${\bf r}_2\big|_{I_1}>0$ for all $n\ge 3$ if $\tau>\tau_1$:
$$
2(n-2)-c^{\tfrac{1}{n-2}} \tau^{-\tfrac{1}{n-2}}=2(n-3)+2-c^{\tfrac{1}{n-2}}\tau^{-\tfrac{1}{n-2}}>0.
$$

By symmetry for $I_2$ we have
\begin{equation}\label{r_i|I_2}
\Pi_1\big|_{I_2}=\Pi_2\big|_{I_1}, \qquad \Pi_2\big|_{I_2}=\Pi_1\big|_{I_1}, \qquad
{\bf r}_1\big|_{I_2}={\bf r}_2\big|_{I_1}, \qquad {\bf r}_2\big|_{I_2}={\bf r}_1\big|_{I_1}.
\end{equation}

Consider now   $I_3$. Then
\begin{eqnarray*}
\Pi_1\big|_{I_3}=\Pi_2\big|_{I_3}=c\tau^{-2(n-2)}>0, \\
{\bf r}_1\big|_{I_3}={\bf r}_2\big|_{I_3}=\frac{1}{4(n-2)\tau}\left(2n-4-c\tau^{3-2n}\right).
\end{eqnarray*}
Therefore ${\bf r}_1\big|_{I_3}={\bf r}_2\big|_{I_3}>0$ for  $n\ge 3$ and
$\tau>\tau_2:=\left(\dfrac{2n-4}{c}\right)^{\tfrac{1}{3-2n}}$.
\end{proof}

\begin{remark}
As it follows from~\eqref{r_i|I_1}  $I_1$ intersects the cone ${\bf r}_2=0$
at the point $\left(c^{\tfrac{1}{n-2}}\,\tau_0^{\tfrac{1-n}{n-2}}, \tau_0, \tau_0\right)$,
where $\tau_0$ is defined from the equation $c^{\tfrac{1}{n-2}}\,\tau^{-\tfrac{1}{n-2}}=2(n-2)$
for all $n\ge 3$.
However this point  does not belong to the set  $\mathscr{R}_{+}$ due to
$\Pi_1\big|_{I_1}=\tau\left(2-c^{\tfrac{1}{n-2}}\,\tau^{-\tfrac{1}{n-2}}\right)\le 0$
 at $\tau=\tau_0$.
Indeed $2-2(n-2)=6-3n\le 0$ for $n\ge 3$.
Actually here we deal with the intersection of $I_1$ with another component~$\Gamma_2^{-}$ of the cone~${\bf r}_2=0$   establishing that $\Gamma_2^{-}$ is the extra component which
has no relation to the set  $\mathscr{R}_{+}$
(the lower surface in aquamarine color in Figure~\ref{pic-1}).
Thus $I_1$ does not intersect the useful component~$\Gamma_2$
of~${\bf r}_2=0$ (see~\eqref{eqn_181124}).
The similar conclusion can be obtained for $I_2$ interchanging the indices by symmetry.
\end{remark}

\begin{lemma}\label{singpo_Stiefel}
The following assertions hold for the system~\eqref{three_equat_Stif}  at a fixed $c>0$:
\begin{enumerate}
\item
For every $n\ge 3$ the system~\eqref{three_equat_Stif} has the unique family of one parametric singular (equilibrium) points given by the formula
$$
{\bf x}^0=(q, q, \kappa q)\in \mathscr{R}_{+}, \mbox{~~ where ~~}  \kappa:=2(n-2)(n-1)^{-1}, \quad q\in \mathbb{R}_{+}.
$$
The actual value $q=q_0$ of the parameter $q$ at which ${\bf x}^0$  belongs to $\Sigma$  can be found as
$$
q_0=\sqrt[2n-3]{c\kappa^{-1}}.
$$
\item
If $n=3$ then $\Sigma$ is the stable manifold of~${\bf x}^0$
with the tangent space  $E^s=\operatorname{Span}\{(-1,1,0), (-1,0,1)\}$ and, hence,  ${\bf x}^0\in \Sigma$ is a stable node.
Moreover, $I_1, I_2$ and $I_3$ are one dimensional stable submanifolds for~${\bf x}^0$.
\item
If $n\ge 4$ then ${\bf x}^0$ is a saddle
with the tangent spaces $E^s=\operatorname{Span}\left\{\big(1, 1, -4(n-2)^2(n-1)^{-1}\big)\right\}$ and $E^u=\operatorname{Span}\{(-1,1,0)\}$
of corresponding manifolds (separatrices)  on~$\Sigma$.
Moreover, the stable separatrice is exactly $I_3$ for all $n\ge 4$.
\item
For all $n\ge 3$ the singular point ${\bf x}^0$ admits also the center manifold
with the tangent $E^c=\operatorname{Span}\big\{(1, 1, \kappa)\big\}$.
\end{enumerate}
\end{lemma}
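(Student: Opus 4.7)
The plan is to characterize equilibria through the Einstein condition ${\bf r}_1 = {\bf r}_2 = {\bf r}_3$ (which is equivalent to $f_1 = f_2 = f_3 = 0$ since $f_i = -2x_i({\bf r}_i - S_{\bf g}/d)$ with $x_i>0$), and then linearize the vector field ${\bf V}$ around the resulting one-parameter family, using the involution $\sigma \colon (x_1,x_2,x_3) \mapsto (x_2,x_1,x_3)$, which preserves ${\bf V}$ and fixes every candidate equilibrium.

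\textbf{Locating equilibria (item~1).} From \eqref{Riccival} a direct computation yields the factorization
\[
{\bf r}_1 - {\bf r}_2 = \frac{(x_2-x_1)\bigl[(n-2)x_3 - (x_1+x_2)\bigr]}{2(n-2)\,x_1 x_2 x_3},
\]
so ${\bf r}_1 = {\bf r}_2$ splits into either $x_1 = x_2 =: q$ or $(n-2)x_3 = x_1+x_2$. In the first case, combining with $f_3 = 0$ from \eqref{three_equat_Stif} gives $x_3\bigl[2(n-2)q - (n-1)x_3\bigr] = 0$, whence $x_3 = \kappa q$, and the identity ${\bf r}_1 = {\bf r}_3 = (n-2)/[2(n-1)q]$ then follows by substitution into \eqref{Riccival}. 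In the second case, eliminating $x_3$ in $f_3 = 0$ leads to
\[
(n-2)^3(x_1-x_2)^2 + \bigl[(n-2)^2 - (n-1)\bigr](x_1+x_2)^2 = 0,
\]
which has no solution in $\mathbb{R}_+^3$: both coefficients are strictly positive for $n\ge 4$, and for $n=3$ the left side equals $-4x_1x_2$, forcing $x_1x_2 = 0$. Hence $(q,q,\kappa q)$, $q>0$, is the only family of equilibria, and $\operatorname{Vol}({\bf x}^0) = \kappa q^{2n-3} = c$ gives $q_0 = \sqrt[2n-3]{c/\kappa}$.

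\textbf{Linearization (items~2--4).} Because $J := \partial{\bf V}/\partial{\bf x}\bigm|_{{\bf x}^0}$ commutes with $\sigma$, it preserves the splitting $\mathbb{R}^3 = V^+ \oplus V^-$ with $V^+ = \{(a,a,b)\}$ and $V^- = \operatorname{Span}\{(-1,1,0)\}$. Differentiating the equilibrium family at ${\bf x}^0$ places $(1,1,\kappa) \in \ker J$, giving the center direction $E^c$ and the center manifold of item~4 (realised concretely by the equilibrium curve itself). By the invariance of $I_3$ from Lemma~\ref{invar_Stiefel}, its tangent $(1, 1, (4-2n)\kappa) = (1, 1, -4(n-2)^2/(n-1))$ is the second eigenvector in $V^+$; restricting ${\bf V}$ to the parameterization $\tau \mapsto (\tau,\tau,c\tau^{4-2n})$ reduces $\dot\tau$ to the scalar function $\bigl[(n-1)c\tau^{3-2n} - 2(n-2)\bigr]/[2(n-2)(2n-3)]$, whose $\tau$-derivative at $\tau = q_0$ (using $c = \kappa q_0^{2n-3}$) equals $-1/q_0$; this is the symmetric eigenvalue $\lambda_s$. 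For the antisymmetric direction I perturb along $(q_0-s, q_0+s, \kappa q_0)$ and use the algebraic identity
\[
f_2 - f_1 = (x_2-x_1)\bigl(2S_{\bf g}/d - {\bf r}_1 - {\bf r}_2\bigr) - (x_1+x_2)({\bf r}_2 - {\bf r}_1);
\]
since ${\bf r}_1 + {\bf r}_2$ and ${\bf r}_3$ are even in $s$ by $\sigma$-parity, the first summand is $O(s^3)$, while the second—after inserting the factorization of ${\bf r}_1 - {\bf r}_2$ displayed above—delivers the antisymmetric eigenvalue $\mu = (n^2 - 5n + 5)/[(n-2)^2 q_0]$.

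\textbf{Assembly and main obstacle.} For $n=3$ one computes $n^2 - 5n + 5 = -1$, giving $\mu = -1/q_0 = \lambda_s$, so $J|_{T_{{\bf x}^0}\Sigma}$ is the scalar operator $-q_0^{-1}\cdot I_2$—a stable node—and the three invariant curves $I_1, I_2, I_3$ from Lemma~\ref{invar_Stiefel} furnish the three one-dimensional stable submanifolds of item~2. For $n\ge 4$ the polynomial $n^2 - 5n + 5$ is strictly positive (its roots $(5\pm\sqrt{5})/2$ both lie below $4$), producing $\mu > 0 > \lambda_s$: a saddle whose stable separatrix must coincide with $I_3$ itself, as a one-dimensional invariant curve through ${\bf x}^0$ tangent to $E^s$, completing item~3. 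The main obstacle is the antisymmetric eigenvalue computation: a naive expansion of $f_2 - f_1$ appears to mix two first-order contributions, and only the $\sigma$-parity argument eliminating the first summand at order $s$ isolates the clean formula for $\mu$—which is precisely what produces the qualitative jump in the local phase portrait between $n=3$ (node) and $n\ge 4$ (saddle).
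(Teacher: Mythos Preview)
Your proof is correct and takes a genuinely different route from the paper's. The paper establishes item~1 by invoking the known classification of invariant Einstein metrics on $\operatorname{SO}(n)/\operatorname{SO}(n-2)$ due to Kerr and Arvanitoyeorgos, and for items~2--4 computes the characteristic polynomial $\chi(\lambda,{\bf x}^0)=\lambda^3+\frac{n-1}{q(n-2)^2}\lambda^2-\frac{n^2-5n+5}{q^2(n-2)^2}\lambda$ of the full $3\times3$ Jacobian directly, reads off the eigenvalues $\lambda_1=(n^2-5n+5)/[(n-2)^2q]$, $\lambda_2=-1/q$, $\lambda_3=0$, and only afterward identifies the eigenvectors and matches the tangent of $I_3$ to $E^s$. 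You instead handle item~1 by a self-contained elimination based on the factorization of ${\bf r}_1-{\bf r}_2$, and for items~2--4 exploit the involution $\sigma$ to split $\mathbb{R}^3=V^+\oplus V^-$ \emph{before} any Jacobian entry is written down; the eigenvectors then come for free from three geometric sources (the equilibrium curve for $E^c$, the invariant curve $I_3$ for $E^s$, the one-dimensional odd subspace $V^-$ for $E^u$), and each eigenvalue is obtained by restricting the flow to a one-dimensional problem. Your approach bypasses the $3\times3$ characteristic polynomial entirely and makes the antisymmetric eigenvalue transparent via the $\sigma$-parity cancellation; it also yields the pleasant observation that the coefficient $n^2-5n+5=(n-2)^2-(n-1)$ governing the node--saddle transition is the same quantity that obstructs the second branch of equilibria in item~1. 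The paper's route is more mechanical and better suited to computer algebra, while yours is more structural and explains \emph{why} the spectrum has this particular shape.
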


\begin{proof} Fix $c>0$ and use ideas of~\cite{Ab_RM}.
1) It should be noted that singular (equilibrium) points of the system~\eqref{decom_ricciflow_general}
is also  invariant Einstein metrics of a considered space $G/H$  and vice versa.
Such a conclusion easily follows from the observation that the equalities $f_1=\cdots =f_k=0$ and
${\bf r}_1=\cdots ={\bf r}_k=d^{-1}S_{\bf g}$ are equivalent.
In particular, according to~\cite{Arvan2, Kerr} the Stiefel manifold $\operatorname{SO}(n)/\operatorname{SO}(n-2)$
admits the unique $\operatorname{SO}(n)$-invariant Einstein metric
$\left(1, 1, \kappa\right)$ up to scale
for every $n\ge 3$ which is also the unique  singular point of the system~\eqref{three_equat_Stif}.
The actual value $q_0$ providing ${\bf x}^0\in \Sigma$
can easily be found from the condition
$$
\operatorname{Vol}(q, q, \kappa q)=c.
$$
To establish that ${\bf x}^0\in \mathscr{R}_{+}$ for all $n\ge 3$ it suffices
to check  the definition of the set $\mathscr{R}_{+}$:
$$
{\bf r}_1({\bf x}^0)={\bf r}_2({\bf x}^0)=\frac{2(n-2)-\kappa}{4q(n-2)}=
\frac{n-2}{2q(n-1)}>0, \qquad {\bf r}_3({\bf x}^0)=\frac{\kappa}{4q}>0.
$$

Denote by  $\chi(\lambda, {\bf x})$ the characteristic polynomial  of the
Jacobian matrix~ ${\bf J}({\bf x})$ of the vector field~${\bf V}({\bf x})$
associated with the differential system~\eqref{three_equat_Stif},
${\bf x}=(x_1, x_2, x_3)$.
Then the cubic equation
$$
\chi(\lambda, {\bf x}^0)= \lambda^3+\frac{n-1}{q(n-2)^2}\,\lambda^2-\frac{n^2-5n+5}{q^2(n-2)^2}\,\lambda=0
$$
admits three real roots
$$
\lambda_1=(n^2-5n+5)(n-2)^{-2}q^{-1}, \qquad \lambda_2=-q^{-1}, \qquad \lambda_3=0
$$
for ${\bf x}={\bf x}^0=\left(q, q, \kappa q\right)$.

2) If $n=3$ then $\lambda_1=\lambda_2=-q^{-1}<0$. Therefore
${\bf x}^0$ is a stable (attracting) node on every~$\Sigma$
with the stable manifold $W^s=\Sigma$ admitting the tangent space
spanned by the eigenvectors  $(-1,1,0)$ and $(-1,0,1)$ corresponding to the
eigenvalue $\lambda_1$ of multiplicity $2$.

In addition,  $I_1, I_2$ and $I_3$ are  invariants at $n=3$  by Lemma~\ref{invar_Stiefel}. Moreover $I_1, I_2$ and $I_3$ are also the subsets of the stable manifold~$\Sigma$ and hence each of them
is stable.

3) For  $n\ge 4$  we have $\lambda_1\lambda_2<0$. Therefore
${\bf x}^0$ is a saddle on~$\Sigma$  with one dimensional
stable and unstable manifolds (separatrices) both contained in $\Sigma$.
The  tangent space $E^s$ of the stable separatrice is spanned by
the eigenvector $\big(1, 1, -4(n-2)^2(n-1)^{-1}\big)$ corresponding to $\lambda_2=-q^{-1}<0$.
The eigenvector $(-1,1,0)$ corresponds to  $\lambda_1>0$.

In addition,  $I_3$ maintains the property
to be invariant for all $n\ge 4$ by Lemma~\ref{invar_Stiefel} and it is the stable manifold of the saddle ${\bf x}^0$ in fact.
Indeed since $\dot{x}_1(\tau)\equiv 1$, $\dot{x}_2(\tau)\equiv 1$, $\dot{x}_3(\tau)=(4-2n)c\tau^{3-2n}$
we have
\begin{eqnarray*}
\dot{x}_1(q_0)&=& \dot{x}_2(q_0)=1, \\
\dot{x}_3(q_0)&=&(4-2n) c q_0^{3-2n}=(4-2n) \kappa=-4(n-2)^2(n-1)^{-1}
\end{eqnarray*}
for $\tau=q_0$ corresponding to the saddle point ${\bf x}^0=(q_0,q_0,\kappa q_0)\in I_3\subset \Sigma$. Therefore the vector $\big(1, 1, -4(n-2)^2(n-1)^{-1}\big)$
is tangent to $I_3$ at  ${\bf x}^0$.
On the other hand this vector spans the stable eigenspace~$E^s$
of~${\bf x}^0$ as shown above.
Therefore the stable manifold (separatrice) $W^s$ of ${\bf x}^0$ coincides
with~$I_3$ for every $n\ge 4$.

4) In final,  $\lambda_3=0$ is responsible for the center (slow) manifold
with the tangent space spanned by the eigenvector $(1, 1, \kappa)$ for all  $n\ge 3$.
\end{proof}

\begin{remark}
Actually we are interested in the special case $c=1$
which corresponds to the metrics~\eqref{metricStiffel_V_2(R^n)} of the unit volume and,
therefore, has a purely geometric meaning.
Studies of~\eqref{three_equat_Stif} on $\Sigma$ with an arbitrary~$c>0$ can easily be reduced to the ``geometric'' case $c=1$ by the change of variables $x_i=X_i \sqrt[3]{c}$ and $t=\tau \sqrt[3]{c}$ in~\eqref{three_equat_Stif} basing on homogeneity of the functions $f_i$ and
autonomy of the system~\eqref{three_equat_Stif} (see also~\cite{Ab_RM}).
So  in the sequel  we assume that $c=1$ without loss of generality.
\end{remark}

\subsection{Structural properties of the set $\Sigma \cap\mathscr{R}_{+}$}

We need also new sets $\gamma_1=\Sigma \cap \widetilde{\Gamma}_1$,
$\gamma_2=\Sigma \cap \widetilde{\Gamma}_2$ (the curves in red and teal color respectively
in Figure~\ref{pic-1}) and $\pi_1=\Sigma \cap \Pi_1$, $\pi_2=\Sigma \cap \Pi_2$
(the curves in blue color there).

\begin{lemma}\label{lem_Stif_curves}
For all $n\ge 3$ the set $\Sigma\cap \mathscr{R}_{+}$  is bounded by the smooth and connected curves
$\pi_1, \pi_2, \gamma_1$ and~$\gamma_2$
such that:

1)  $\pi_1 \cap \pi_2= \pi_1 \cap \gamma_1= \pi_2 \cap \gamma_2=\emptyset$ and
the components  in the pairs $(\pi_1, \pi_2)$, $(\pi_1,  \gamma_1)$ and $(\pi_2,  \gamma_2)$
approach each other at infinity as close as we like (shortly  $\pi_1 \rightarrow \pi_2$,
$\pi_1 \rightarrow \gamma_1$ and   $\pi_2 \rightarrow \gamma_2$);

2) The curves  $\gamma_1$ and $\gamma_2$ admit the single common point $P_{12}\big(\overline{p}, \overline{p}, (2n-4)\overline{p}\big)$, where
$\overline{p}=\left(\dfrac{c}{2n-4}\right)^{\tfrac{1}{2n-3}}$.
\end{lemma}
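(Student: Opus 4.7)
The plan is to exploit Lemma~\ref{lem_Stif_surfes}, which decomposes $\partial(\mathscr{R}_{+})=\Pi_1\cup\Pi_2\cup\widetilde{\Gamma}_1\cup\widetilde{\Gamma}_2$ for $n\ge 4$ (the case $n=3$ is uniform once one notices that $\widetilde{\Gamma}_1\cup\widetilde{\Gamma}_2$ degenerates into the bounding plane $\Pi_3$). Since $\operatorname{Vol}(x_1,x_2,x_3)=x_1^{n-2}x_2^{n-2}x_3$ has strictly positive partial derivatives on $\mathbb{R}_{+}^{3}$, the algebraic surface $\Sigma$ is a smooth embedded hypersurface transverse to each piece of $\partial(\mathscr{R}_{+})$, and consequently the two-dimensional domain $\Sigma\cap\mathscr{R}_{+}$ inherits as topological boundary exactly $\Sigma\cap\partial(\mathscr{R}_{+})=\pi_1\cup\pi_2\cup\gamma_1\cup\gamma_2$.

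For smoothness and connectedness of each piece I would parameterize. On $\Pi_1$ set $x_1=x_2+x_3$ with $(x_2,x_3)\in\mathbb{R}_{+}^{2}$; then $\pi_1$ is the zero set of $h(x_2,x_3):=(x_2+x_3)^{n-2}x_2^{n-2}x_3-c$, whose partial derivatives are strictly positive on the open positive quadrant. For every fixed $x_2>0$, $h(x_2,\cdot)$ strictly increases from $-c$ to $+\infty$, so there is a unique smooth $x_3=x_3(x_2)$ cutting out $\pi_1$ as a smooth connected graph; the case of $\pi_2$ is symmetric. For $\gamma_1$ I employ the parameterization $(\mu(\nu)t,\nu t,t)$ with $\nu\in(0,\widetilde{\nu}]$ and $\mu(\nu)=\sqrt{\nu^{2}-2(n-2)\nu+1}$ from Lemma~\ref{lem_Stif_surfes}; substituting into $\operatorname{Vol}=c$ yields $\mu^{n-2}\nu^{n-2}t^{2n-3}=c$, which determines a unique smooth positive $t=t(\nu)$, so $\gamma_1$ is smooth and connected, and likewise $\gamma_2$.

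Disjointness $\pi_1\cap\pi_2=\pi_1\cap\gamma_1=\pi_2\cap\gamma_2=\emptyset$ is inherited directly from the corresponding disjointness of $\Pi_1,\Pi_2,\widetilde{\Gamma}_1,\widetilde{\Gamma}_2$ already recorded in Lemma~\ref{lem_Stif_surfes}. The asymptotic approach is what I expect to be the main obstacle; it will be obtained by exhibiting a common asymptote for each pair and estimating Euclidean distances. For $(\pi_1,\pi_2)$, at a common height $x_3=\varepsilon\to 0^{+}$ the constraint $\operatorname{Vol}=c$ combined with $x_1=x_2+x_3$ on $\pi_1$ (resp. $x_2=x_1+x_3$ on $\pi_2$) becomes the same equation $(\xi+\varepsilon)^{n-2}\xi^{n-2}\varepsilon=c$ with unique solution $\xi=\xi(\varepsilon)\to\infty$, so the two points $(\xi+\varepsilon,\xi,\varepsilon)\in\pi_1$ and $(\xi,\xi+\varepsilon,\varepsilon)\in\pi_2$ lie at distance $\sqrt{2}\,\varepsilon\to 0$. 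For $(\pi_1,\gamma_1)$, I parameterize both curves by $x_3=s\to\infty$: the $\Sigma$-equation forces the same leading behaviour $x_2\sim c^{1/(n-2)}s^{-(n-1)/(n-2)}\to 0$ on each curve, while the expansion $\mu(\nu)=1-(n-2)\nu+O(\nu^{2})$ at $\nu=0^{+}$ ensures that the two values of $x_1$, namely $x_2+s$ on $\pi_1$ and $\mu(\nu)s$ on $\gamma_1$, differ only by $(n-1)x_2+o(1)\to 0$; the pair $(\pi_2,\gamma_2)$ is handled by symmetry. Finally, Lemma~\ref{lem_Stif_surfes} identifies $\widetilde{\Gamma}_1\cap\widetilde{\Gamma}_2=L=\{(p,p,(2n-4)p)\,:\,p>0\}$, so $\gamma_1\cap\gamma_2=\Sigma\cap L$ is cut out by the single equation $(2n-4)p^{2n-3}=c$, producing the unique common point $P_{12}=(\overline{p},\overline{p},(2n-4)\overline{p})$ with $\overline{p}=\bigl(c/(2n-4)\bigr)^{1/(2n-3)}$.
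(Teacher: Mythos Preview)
Your proposal is correct and covers all the required points. The structural backbone --- inheriting disjointness and the intersection $\gamma_1\cap\gamma_2=\Sigma\cap L$ from Lemma~\ref{lem_Stif_surfes}, and parameterizing each boundary piece to read off smoothness and connectedness --- matches the paper's own argument. The differences are in execution rather than strategy.

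For smoothness and connectedness the paper writes down explicit global parameterizations: it sets $x_2=tx_1$ on $\Pi_1\cup\Pi_2$ (so that $\mathbf{r}_3=0$ becomes $(t-1)^2x_1^2=x_3^2$) and solves $\operatorname{Vol}=c$ to get closed formulas $\phi_i(t)$, with $t\in(0,1)$ giving $\pi_1$ and $t\in(1,\infty)$ giving $\pi_2$; analogously it sets $x_2=tx_3$ on $\Gamma_1$ to obtain $\psi_i(t)$ on $(0,\widetilde{t}\,]$. Your implicit-function argument for $\pi_i$ and the $(\mu(\nu)t,\nu t,t)$-parameterization for $\gamma_i$ accomplish the same thing with less algebra; the paper's explicit formulas are not needed elsewhere, so nothing is lost.

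The genuine methodological difference is in the asymptotics. The paper does not estimate distances directly; instead it invokes the auxiliary curves $I_1,I_2,I_3\subset\Sigma$ of Lemma~\ref{invar_Stiefel} as common asymptotes and shows that the \emph{defining functions} of the relevant boundary pieces tend to zero along them (e.g.\ $\mathbf{r}_1|_{I_2}\to 0$ and $\Pi_1|_{I_2}\to 0$ as $\tau\to\infty$, whence $\pi_1\to I_2\leftarrow\gamma_1$; and $\Pi_1|_{I_3}=\Pi_2|_{I_3}\to 0$, whence $\pi_1\to I_3\leftarrow\pi_2$). Your approach --- parameterize both curves by the common coordinate $x_3$ and compare the remaining coordinates --- is more direct and yields an actual Euclidean-distance bound (e.g.\ the $\sqrt{2}\,\varepsilon$ for the pair $(\pi_1,\pi_2)$ is cleaner than what the paper obtains). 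The paper's route has the advantage of re-using the formulas \eqref{Pi_1|I_1}--\eqref{r_i|I_2} already computed for Lemma~\ref{invar_Stiefel}, so it is cheap in context; your route is self-contained and avoids the intermediate step of passing through a third curve.

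One small wording point: for $n=3$ the pieces $\widetilde{\Gamma}_1$ and $\widetilde{\Gamma}_2$ do not coincide but rather tile the plane $\Pi_3$ along $L$, so $\gamma_1$ and $\gamma_2$ are still distinct half-curves meeting only at $P_{12}$; your parenthetical ``degenerates into $\Pi_3$'' is consistent with this once read correctly.
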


\begin{proof}
{\it Intersections and long time behaviors of the curves $\pi_i, \gamma_i$, $i=1,2$}.
Since $\gamma_1\cap \gamma_2=(\widetilde{\Gamma}_1\cap \widetilde{\Gamma}_2)\cap \Sigma$ and $\widetilde{\Gamma}_1\cap \widetilde{\Gamma}_2=L$ by Lemma~\ref{lem_Stif_surfes} the set $\gamma_1\cap \gamma_2$
consists of the single point $P_{12}$, where $\overline{p}$ is defined as the unique root of the equation $p^{n-2}p^{n-2}(2n-4)p=c$ for a given $n\ge 3$.

 By analogy  $\pi_1 \cap \pi_2= \pi_1 \cap \gamma_1= \pi_2 \cap \gamma_2=\emptyset$
  easily follows from
 $\Pi_1 \cap \Pi_2= \Pi_1 \cap \widetilde{\Gamma}_1= \Pi_2 \cap \widetilde{\Gamma}_2=\emptyset$  also known from Lemma~\ref{lem_Stif_surfes} for all $n\ge 3$.
To prove that $\pi_1$ and~$\gamma_1$ approximate each other at infinity  we use the approach developed in~\cite{Ab_RM, Ab24}:
we will  show $\pi_1\rightarrow I_2$ and $\gamma_1\rightarrow I_2$
instead of $\pi_1 \rightarrow \gamma_1$
(note that it is quite difficult
to derive it directly from the corresponding systems
which define $\pi_1$ and~$\gamma_1$).
According to Lemma~\ref{invar_Stiefel} we know that
$I_1, I_2,I_3\subset \mathscr{R}_{+}$ for sufficiently large $\tau$ (more precisely, for all $\tau>\max\{\tau_1, \tau_2\}$).

Moreover, Lemma~\ref{invar_Stiefel} also implies  the following limits for every fixed $n\ge 3$ (see formulas~\eqref{Pi_1|I_1}, \eqref{r_i|I_1} and \eqref{r_i|I_2}):
\begin{eqnarray*}
\lim_{\tau \to +\infty}{\bf r}_1\big|_{I_2} &=&
\lim_{\tau \to +\infty}\frac{1}{4(n-2)\tau}\left(2(n-2)-\tau^{-\tfrac{1}{n-2}}\right)=0,\\
\lim_{\tau \to +\infty}\Pi_1\big|_{I_2}&=&  \lim_{\tau \to +\infty}\tau^{-\tfrac{n-1}{n-2}}=0.
\end{eqnarray*}
Therefore $\pi_1 \rightarrow \gamma_1$ at infinity as close as we want.
Then $\pi_2 \rightarrow \gamma_2$ is clear from symmetry.
In final $\Pi_1\big|_{I_3}=\Pi_2\big|_{I_3}=\tau^{-2(n-2)}\rightarrow 0$ as $\tau\to +\infty$ implying
$\pi_1 \rightarrow \pi_2$.

\smallskip

{\it Smoothness and connectedness of the curves $\pi_i, \gamma_i$, $i=1,2$}.
Substituting $x_2=tx_1$, where $t>0$, into ${\bf r}_3=0$ we easily get  $(t-1)^2x_1^2-x_3^2=0$.
Taking into account the condition $x_1^{n-2}x_2^{n-2}x_3=c$ (the equation of the surface $\Sigma$)
we obtain the  parametric equations
$$
x_1=\phi_1(t)=c^{\frac{1}{2n-3}}\, t^{\frac{2-n}{2n-3}}\, |t-1|^{\frac{1}{3-2n}}, \qquad
x_2=\phi_2(t)=t\,\phi_1(t), \qquad
x_3=\phi_3(t)=|t-1|\,\phi_1(t)
$$
for~$\pi_1$ and $\pi_2$ with $t\in (0,+\infty)\setminus \{1\}$ so that the interval  $(0,1)$  corresponds to $\pi_1$ and
$(1,+\infty)$  corresponds to $\pi_2$.
By analogy putting $x_2=tx_3$ in ${\bf r}_1=0$ the equations
$$
x_3=\psi_3(t)=c^{\frac{1}{2n-3}}\, t^{-\frac{n-2}{2n-3}}\, \Psi(t)^{-\frac{n-2}{2n-3}}, \quad
x_2=\psi_2(t)=t\,\psi_3(t), \quad
x_1=\psi_1(t)=\psi_3(t)\Psi(t), \quad  t\in (0,l),
$$
can be obtained for the curve $\Sigma\cap \Gamma_1$ with  $\Psi(t)=\sqrt{t^2-2(n-2)t+1}$  and  $l$ is given in~\eqref{eqn_211124}. The  representation $x_1=\psi_2(t)$, $x_2=\psi_1(t)$, $x_3=\psi_3(t)$
is clear for the curve $\Sigma\cap \Gamma_2$ due to symmetry, $t\in (0,l)$.
It is clear  that   $\gamma_1\subset \Sigma\cap \Gamma_1$ and
$\gamma_2 \subset \Sigma\cap \Gamma_2$ and hence $\gamma_1$ and $\gamma_2$
can be parameterized by the same functions $\psi_1, \psi_2, \psi_3$, but with
$t\in \Delta$, where $\Delta$ is some interval such that $\Delta\subset (0,l)$.
To clarify $\Delta$ recall  Lemma~\ref{lem_Stif_surfes} according to which
the curves $\Sigma\cap \Gamma_1$ and  $\Sigma\cap \Gamma_2$   leave ``tails'' (extra pieces)
not relating to the set  $\Sigma \cap \mathscr{R}_{+}$ after their intersection
in the  case  $n\ge 4$ and  coincide if $n=3$.
So to eliminate those extra pieces take into account $x_1=x_2$ at the unique point $P_{12}\in \gamma_1 \cap \gamma_2$ which implies the equation $\Psi(t)=t$ with the unique root $\widetilde{t}=(2n-4)^{-1}$.
In what follows  that  $\Delta=\big(0,\widetilde{t}\,\big]$.

Now it is easy to see that the functions $\phi_1, \phi_2$ and $\phi_3$ are differentiable
on the sets $(0,1)$ and $(1,+\infty)$.
Therefore the curves $\pi_1$  and $\pi_2$ are smooth.
The curve   $\pi_1$ must be connected as the image of the connected set $(0,1)$ under the continuous function  $t \mapsto \big(\phi_1(t),\phi_2(t),\phi_3(t)\big)$, $t\in (0,1)$.
So is $\pi_2$ being the continuous image of the connected set $(1,+\infty)$.
Smoothness and connectedness of  $\gamma_1$ and $\gamma_2$
follow analogously from  the differentiability of the functions $t \mapsto \big(\psi_1(t),\psi_2(t),\psi_3(t)\big)$ and
$t \mapsto \big(\psi_2(t),\psi_1(t),\psi_3(t)\big)$,
where $t\in \Delta$ and $\Delta$ is a connected set.
\end{proof}

\begin{figure}[h!]
\centering
\includegraphics[width=0.45\textwidth]{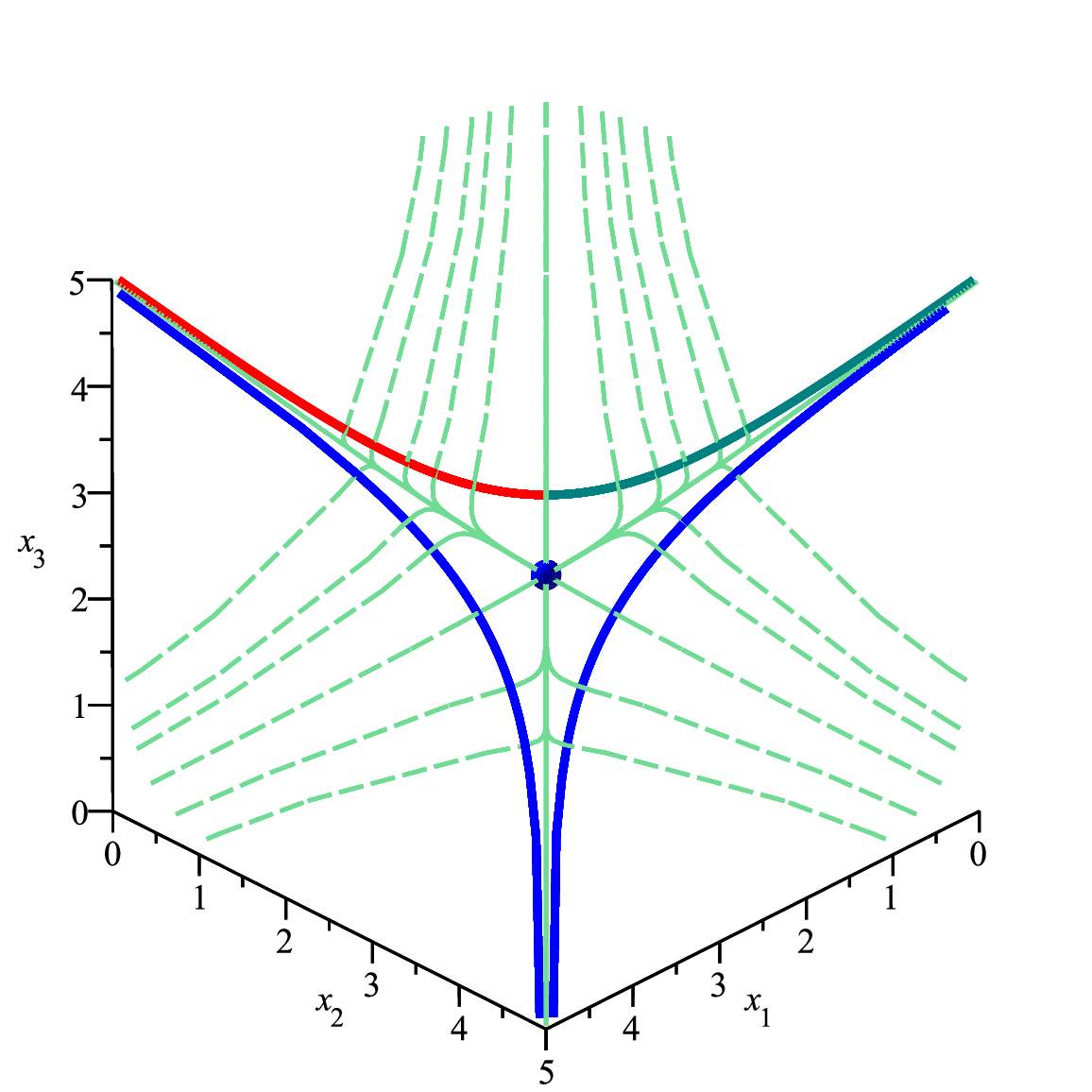}
\includegraphics[width=0.45\textwidth]{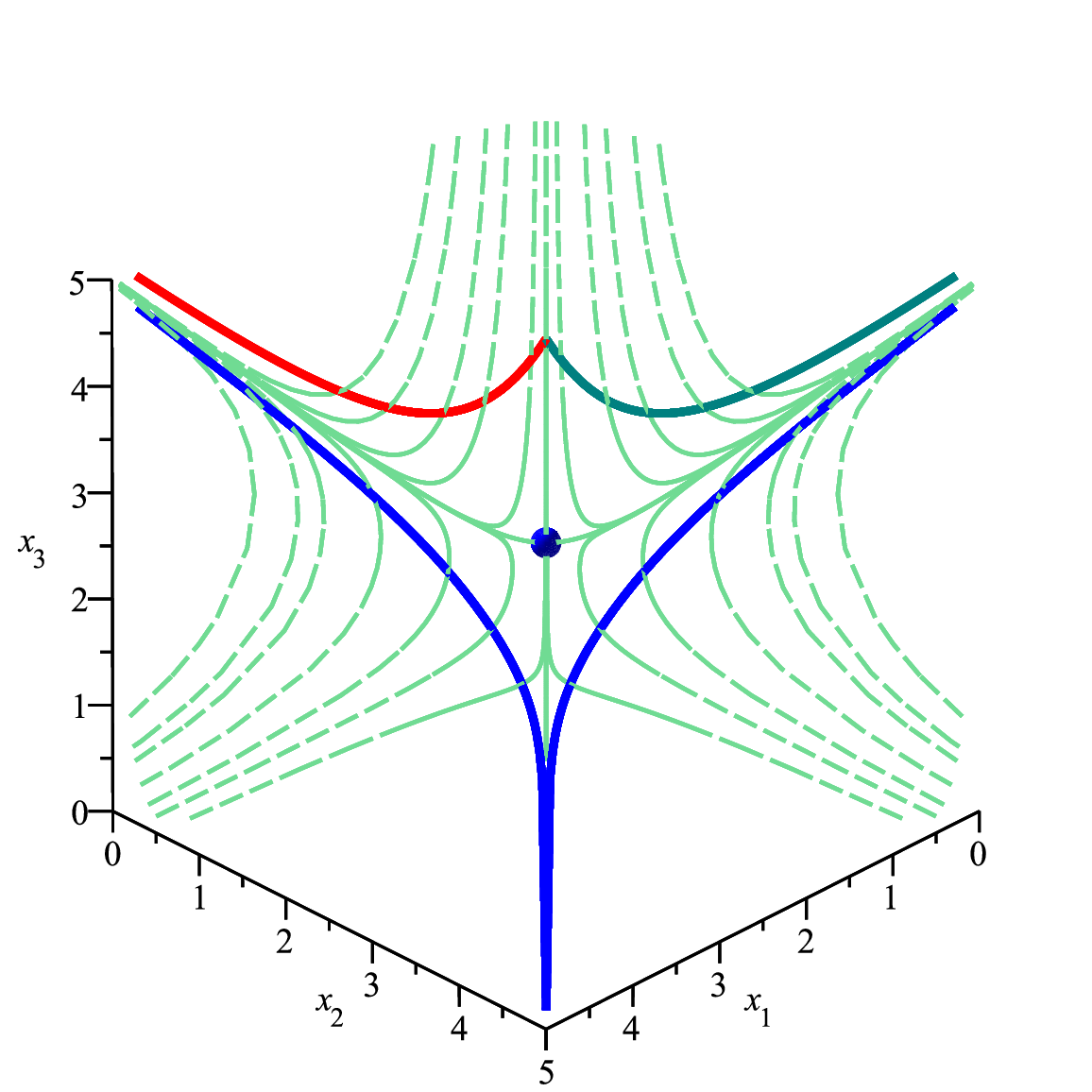}
\caption{The dynamics  of the system~\eqref{three_equat_Stif} on the invariant set $\Sigma$
for  $n=3$ (the left panel) and $n=4$ (the right panel)}
\label{pic1}
\end{figure}

\bigskip

\begin{proof}[Proof of Theorem~\ref{thm_371124}]
{\it Step   1}. Any trajectory of~\eqref{three_equat_Stif} originating in~$\mathscr{R}_{+}$ {\it remains} there forever.
Indeed by Lemma~\ref{lem_241024_2}   the vector field ${\bf V}$
associated with the  system~\eqref{three_equat_Stif}
is directed  into~$\mathscr{R}_{+}$ on every point of~$\partial(\mathscr{R}_{+})$ for all $n\ge 3$.
Therefore  no trajectory of~\eqref{three_equat_Stif}
can leave $\mathscr{R}_{+}$: if ${\bf x}(0)\in \mathscr{R}_{+}$ then
${\bf x}(t)\in \mathscr{R}_{+}$ for all $t>0$.
According to the definition of the set $\mathscr{R}_{+}$ this means that~\eqref{ricciflow}
preserves the positivity of the Ricci curvature of metrics~\eqref{metricStiffel_V_2(R^n)}
on $\operatorname{SO}(n)/\operatorname{SO}(n-2)$:
metrics with positive Ricci curvature can be evolved only into metrics with positive Ricci curvature.

{\it Step 2}. Any trajectory originating in the exterior of $\mathscr{R}_{+}$ {\it enters} $\mathscr{R}_{+}$ in finite time. To prove this assertion we use Lemma~\ref{singpo_Stiefel}.
It suffices to study~\eqref{three_equat_Stif} on $\Sigma \cap \mathscr{R}_{+}$,
 where $\Sigma$  are invariant surfaces of~\eqref{three_equat_Stif}
responsible for dominant motions of its trajectories corresponding to
nonzero eigenvalues $\lambda_1$ and $\lambda_2$
according to Lemmas~\ref{invar_Stiefel} and~\ref{singpo_Stiefel}.
Any movement caused by $\lambda_3=0$  for  all $n\ge 3$ can be neglected because it can only occur within the domain $\mathscr{R}_{+}$ itself along the eigenvector $(1,1,\kappa)$ (as slow transitions between different invariant surfaces):
$$
{\bf r}_1 (1,1,\kappa)={\bf r}_2 (1,1,\kappa)=\frac{n-2}{2(n-1)}>0, \qquad
{\bf r}_3 (1,1,\kappa)=\frac{\kappa}{4}>0.
$$

By  Lemma~\ref{singpo_Stiefel} for every $n\ge 3$  there exists $q=q_0$ such that
${\bf x}^0\in \Sigma\cap \mathscr{R}_{+}$. Therefore
every trajectory of~\eqref{three_equat_Stif} originated in~$\Sigma\cap \operatorname{ext}\mathscr{R}_{+}$ must intersect the boundary $\Sigma\cap \partial(\mathscr{R}_{+})$ in finite time and enter the set $\Sigma\cap \mathscr{R}_{+}$
governing by the stable manifold $W^s=\Sigma$ (and by the stable submanifolds $I_1, I_2$ and $I_3$ as well) of the unique stable node~${\bf x}^0$
in the case $n=3$  and by the separatrices $W^s=I_3\subset \Sigma$ and $W^u\subset \Sigma$ of~${\bf x}^0$ being the unique saddle for every $n\ge 4$ (see Figure~\ref{pic1}).
The domain~$\Sigma \cap \mathscr{R}_{+}$ is able to receive all trajectories
due to the fact that it is unbounded with the boundary~$\Sigma \cap \partial(\mathscr{R}_{+})$ consisting of
the curves $\pi_1, \pi_2, \gamma_1$ and $\gamma_2$ in $\mathbb{R}^3$ such that $\pi_1\cap \pi_2=\pi_1\cap \gamma_1=\pi_2\cap \gamma_2=\emptyset$ according to Lemma~\ref{lem_Stif_curves}   (which is also quite consistent with the facts established above,
indeed, if $\Sigma \cap \partial(\mathscr{R}_{+})$ was a bounded set, then
in order to satisfy Lemma~\ref{lem_241024_2}, the dynamics of the system~\eqref{three_equat_Stif} should be different from those described in Lemma~\ref{singpo_Stiefel} contradicting  it).
Another significant circumstance
what causes all trajectories to enter~$\mathscr{R}_{+}$ and
remain there forever for all $n\ge 4$ is that the separatrices $W^s$ and  $W^u$ are subsets in~$\mathscr{R}_{+}$ at infinity since no of them can reach (intersect or touch) the boundary~$\Sigma\cap \partial(\mathscr{R}_{+})$ according to Lemma~\ref{lem_241024_2}.

Thus we proved that on the Stiefel manifold~$\operatorname{SO}(n)/\operatorname{SO}(n-2)$, $n\ge 3$,
the normalized Ricci flow~\eqref{ricciflow} evolves   all metrics~\eqref{metricStiffel_V_2(R^n)}  (of mixed or positive Ricci curvature) into metrics with positive Ricci curvature.
Theorem~\ref{thm_371124} is proved.
\end{proof}

\begin{figure}[h!]
\centering
\includegraphics[width=0.45\textwidth]{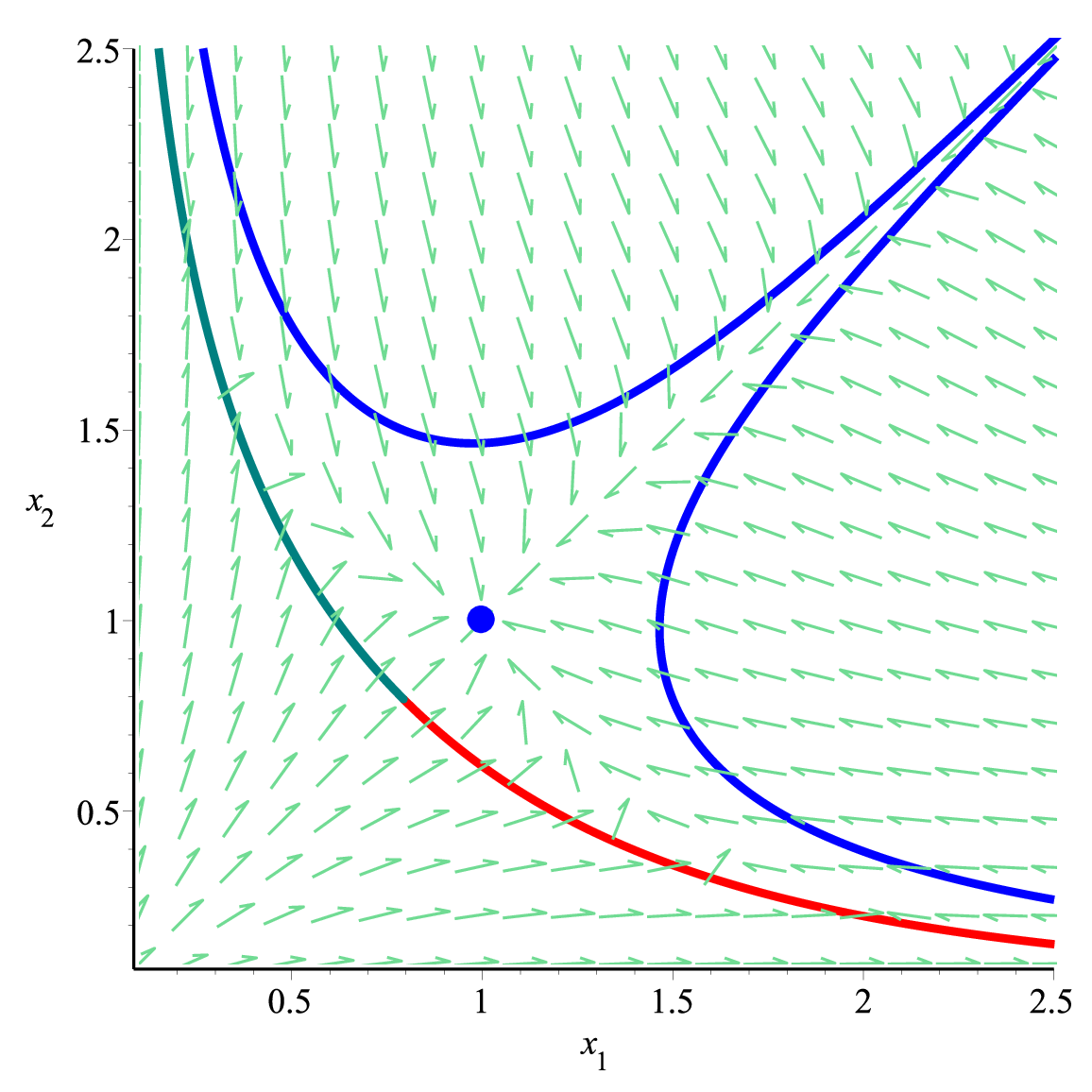}
\includegraphics[width=0.45\textwidth]{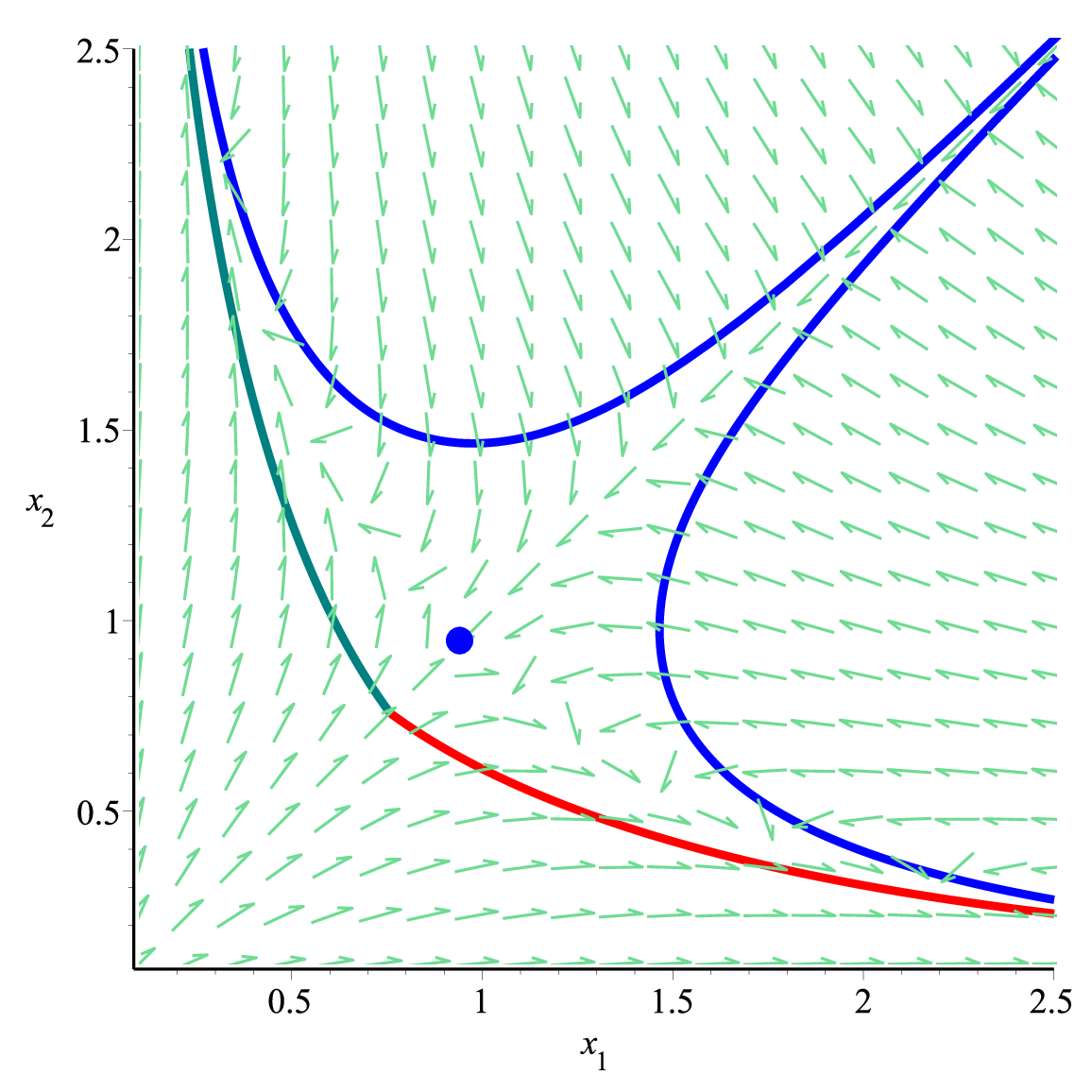}
\caption{The phase portrait of the planar system~\eqref{two_equat_Stif}
for  $n=3$ (the left panel) and $n=4$ (the right panel)}
\label{pic3}
\end{figure}

\section{Planar illustrations}

Clearly  the volume function $\operatorname{Vol}=x_1^{n-2}x_2^{n-2}x_3$
is a first integral of~\eqref{three_equat_Stif}.
Basing on this fact and using the techniques developed in~\cite{AANS1}
the system~\eqref{three_equat_Stif} could  equivalently be reduced to the following planar system:
\begin{equation}\label{two_equat_Stif}
\dfrac {dx_1}{dt} = \widetilde{f}_1(x_1,x_2), \qquad  \dfrac {dx_2}{dt} = \widetilde{f}_2(x_1,x_2),
\end{equation}
where
$\widetilde{f}_i(x_1,x_2):= f_i\left(x_1,x_2,\varphi(x_1,x_2)\right)$
and $\varphi(x_1,x_2):=(x_1x_2)^{2-n}$.
The corresponding  singular point of~
\eqref{two_equat_Stif} is
$\widetilde{{\bf x}}^0=(q_0, q_0)$, where $q_0=\sqrt[2n-3]{(n-1)(2n-4)^{-1}}$.

On the other hand it is well known that
$\chi(\lambda, {\bf x}^0)=\lambda^3-\rho\lambda^2+\delta\lambda$,
where $\rho=\operatorname{trace}\widetilde{{\bf J}}(\widetilde{{\bf x}}^0)$ and
$\delta=\operatorname{det}\widetilde{{\bf J}}(\widetilde{{\bf x}}^0)$
are respectively the trace and the determinant of
the Jacobian matrix $\widetilde{{\bf J}}(\widetilde{{\bf x}})$ of the system~\eqref{two_equat_Stif} evaluated at its singular point $\widetilde{{\bf x}}=\widetilde{{\bf x}}^0$
(see also \cite{AANS1}).
Then consequently
$$
\delta=-\frac{n^2-5n+5}{(n-2)^2q^2}, \qquad
\rho=-\frac{n-1}{(n-2)^2q}<0, \qquad \sigma=\rho^2-4\delta=\frac{(2n-3)^2(n-3)^2}{(n-2)^4q^2}\ge 0.
$$
According to the theory of planar dynamical systems $\widetilde{{\bf x}}^0$ is a hyperbolic stable node of~\eqref{two_equat_Stif} for $n=3$
since $\delta>0$, $\rho<0$ and $\sigma=0$.
Clearly, $\widetilde{{\bf x}}^0$ is a hyperbolic saddle of~\eqref{two_equat_Stif} for all $n\ge 4$
since $\delta<0$. These results are illustrated in Figure~\ref{pic3}.

\begin{figure}[h!]
\centering
\includegraphics[width=0.45\textwidth]{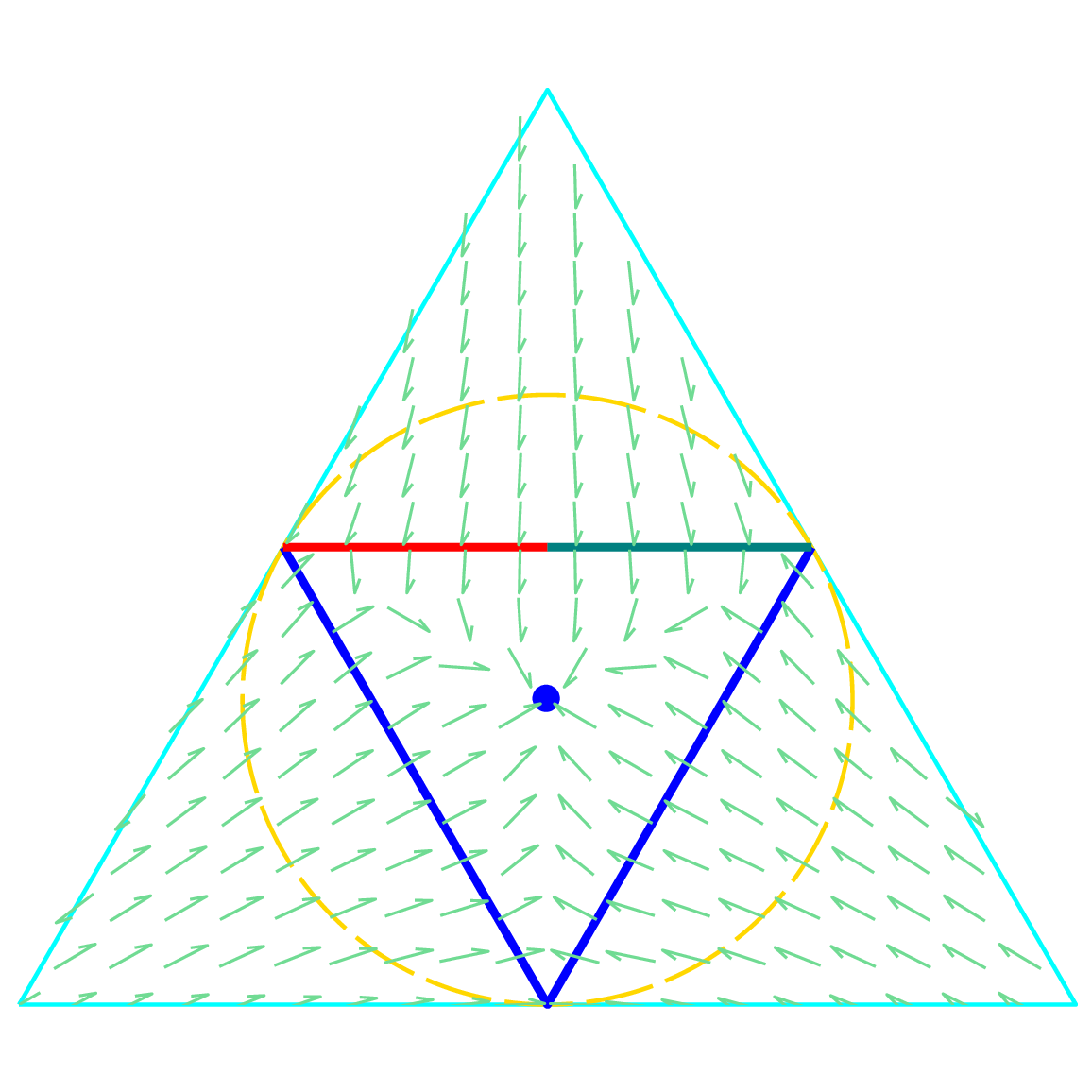}
\includegraphics[width=0.45\textwidth]{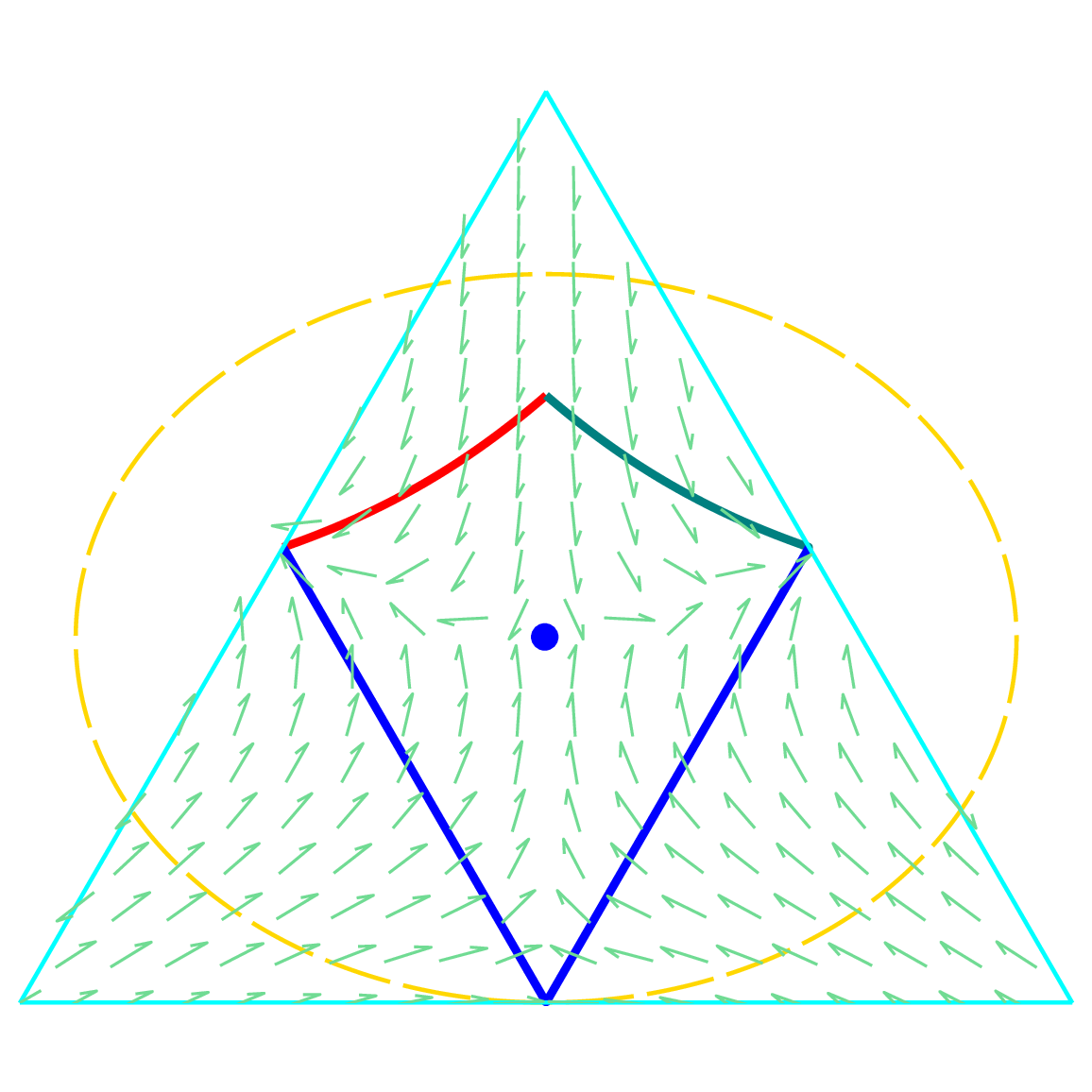}
\caption{The phase portrait of~\eqref{three_equat_Stif} on the plane $x_1+x_2+x_3=1$
at $n=3$  (the left panel) $n=4$ (the right panel)
}
\label{pic5}
\end{figure}

\

\medskip

Using the idea suggested by Professor Wolfgang Ziller in~\cite{AN}
the dynamics of~\eqref{three_equat_Stif} can be illustrated on the plane $x_1+x_2+x_3=1$ (denote it $\Pi_0$)
preserving the dihedral symmetry of the problem.
In Figure~\ref{pic5} the relevant results are depicted,
where the edges of the big triangle
correspond to $x_i=0$ on $\Pi_0$;
the  curves   $\Pi_0\cap \widetilde{\Gamma}_1$,
$\Pi_0\cap \widetilde{\Gamma}_2$ and $\Pi_0\cap \Pi_1$, $\Pi_0 \cap \Pi_2$
are depicted in red, teal and blue colors respectively;
the yellow circle (or ellipse) represents  the curve defined by the systems of the equations
$x_0^2-(2n-4)(x_1+x_2)x_0+(x_1-x_2)^2=0$ and $x_1+x_2+x_3=1$,
where the first one is equivalent to $S_{{\bf g}}=0$.
These illustrations confirm the well known general fact that the normalized Ricci flow
preserves the positivity of the scalar curvature
of invariant  metrics on every compact homogeneous space
(see~\cite{Ham2, Lauret}).

\section{Final remarks}

 The geometric problem considered in the article can represent some interest from the point of view of the dynamical systems. It is not easy to find the smallest invariant set (or  minimal invariant sets) of the flow.
Actually  in Theorem~\ref{thm_371124} we proved that
for every invariant set~$\Sigma$ of the system~\eqref{three_equat_Stif}
defined as $x_1^{n-2}x_2^{n-2}x_3=c$ there exists its invariant subset $\Sigma\cap \mathscr{R}_{+}$.
Similar results were obtained in \cite{Ab7, Ab_RM, Ab24, AN} for~\eqref{ricciflow}
reduced to a dynamical system on  generalized Wallach spaces,
see, for instance,
the case $a\in (1/6, 1/4)\cup (1/4,1/2)$ in~\cite[Theorem~3]{AN}, the case $a=1/6$ in~\cite[Theorem~4 ]{AN},
the case $a=1/4$ in~\cite[Theorem~3]{Ab7}
and the case $a\in [1/4,1/2)$ in~\cite[Theorem~2]{Ab_RM}
which concern invariance of certain sets related to generalized Wallach spaces
with $a_1=a_2=a_3=a$. The general case $a_i\ne a_j\ne a_k\ne a_i$ was studied
in the recent manuscript~\cite{Ab24}, where some additional conditions were found
on the parameters $a_1,a_2,a_3$  which provide the invariance of a set analogous to~$\mathscr{R}_{+}$.

\section*{Acknowledgments}

The author is indebted to Professor Yu.\,G.~Nikonorov for helpful discussions concerning the topic.

\vspace{10mm}

\bibliographystyle{amsunsrt}

\vspace{5mm}

\end{document}